\numberwithin{equation}{section}
\theoremstyle{plain}
\newtheorem{theorem}{Theorem}[section]
\newtheorem{lemma}[theorem]{Lemma}
\newtheorem{proposition}[theorem]{Proposition}
\theoremstyle{remark}
\newtheorem{remark}{Remark}
\DeclareMathOperator{\ord}{ord}
\DeclareMathOperator{\Cyl}{Cyl}
\def\smallbox{\mathbin{\scalerel*{\Box}{\cap}}}
\def\bigbox{\mathord{\scalerel*{\Box}{\big(}}}
\DeclareMathOperator*{\bsq}{\bigbox}
\DeclareMathOperator*{\sq}{\smallbox}
\newcommand{\bZ}{{\bf Z}}
\newcommand{\bX}{{\bf X}}
\newcommand{\sfrac}[2]{{\textstyle\frac{#1}{#2}}}
\newcommand{\bx}{{\bf x}}
\newcommand{\by}{{\bf y}}
\newcommand{\bc}{{\bf c}}
\newcommand{\Lspec}{l_{0}}
\newcommand{\MC}{mul\-ti\-pli\-ca\-tive co\-a\-le\-scent}
\newcommand{\cvd}{l^2_{\mbox{{\scriptsize $\searrow$}}}}
\newcommand{\cvt}{l^3_{\mbox{{\scriptsize $\searrow$}}}}
\newcommand{\bA}{{\bf A}}
\newcommand{\R}{\mathbb{R}}
\newcommand{\I}{\mathbb{I}}
\newcommand{\N}{\mathbb{N}}
\newcommand{\p}{\mathbb{P}}
\newcommand{\FF}{\mathcal{F}}
\newcommand{\E}{\mathbb{E}}
\newcommand{\G}{\mathrm{G}}
\newcommand{\cdl}{c\`{a}dl\`{a}g }
\newcommand{\cI}{\mathcal{I}}
\begin{document}

\begin{frontmatter}
\title{On Moments of Multiplicative Coalescents}
%\title{A sample article title with some additional note\thanksref{t1}}
\runtitle{On Moments of Multiplicative Coalescents}
%\thankstext{T1}{A sample additional note to the title.}

\begin{aug}
%%%%%%%%%%%%%%%%%%%%%%%%%%%%%%%%%%%%%%%%%%%%%%%
%% Only one address is permitted per author. %%
%% Only division, organization and e-mail is %%
%% included in the address.                  %%
%% Additional information can be included in %%
%% the Acknowledgments section if necessary. %%
%%%%%%%%%%%%%%%%%%%%%%%%%%%%%%%%%%%%%%%%%%%%%%%
\author[A]{\fnms{Vitalii} \snm{Konarovskyi}\ead[label=e1]{konarovskyi@gmail.com}}
\and
\author[B]{\fnms{Vlada} \snm{Limic}\ead[label=e2]{vlada@math.unistra.fr}}
%%%%%%%%%%%%%%%%%%%%%%%%%%%%%%%%%%%%%%%%%%%%%%
%% Addresses                                %%
%%%%%%%%%%%%%%%%%%%%%%%%%%%%%%%%%%%%%%%%%%%%%%
\address[A]{Faculty of Mathematics, Bielefeld University;
  Institute of Mathematics,
  Leipzig University;\\
  Institute of Mathematics of NAS of Ukraine,
\printead{e1}}

\address[B]{IRMA,
  Strasbourg University,
\printead{e2}}
\end{aug}

\begin{abstract}
  We prove existence of all moments of the multiplicative coalescent at all times. We obtain as byproducts a number of related results which could be of general interest. In particular, we show the finiteness of the second moment of the $l^2$ norm for any extremal eternal version of multiplicative coalescent.
Our techniques are in part inspired by percolation, and in part are based on tools from stochastic analysis, notably the semi-martingale and the excursion theory.  
\end{abstract}

\begin{keyword}[class=MSC]
\kwd[Primary ]{60J90}
\kwd{05C80}
\kwd{60J75}
\kwd[; secondary ]{60G51}
\kwd{60K35}
\end{keyword}

\begin{keyword}
\kwd{Multiplicative coalescent}
\kwd{random graph}
\kwd{excursion}
\kwd{L\'evy process}
\kwd{moment estimates}
\end{keyword}

\end{frontmatter}
%%%%%%%%%%%%%%%%%%%%%%%%%%%%%%%%%%%%%%%%%%%%%%
%% Please use \tableofcontents for articles %%
%% with 50 pages and more                   %%
%%%%%%%%%%%%%%%%%%%%%%%%%%%%%%%%%%%%%%%%%%%%%%
%\tableofcontents

\section{Introduction}%
\label{sec:preliminaries}

The initial motivation for this work came from our recent paper~\cite{Konarovskyi:MC:2020}, where {\em restricted multiplicative merging} (RMM)
was introduced as an important tool for studying novel scaling limits of stochastic block models. 
As a step in our analysis, we needed (see Appendix of \cite{Konarovskyi:MC:2020}) to show that the fourth moment of the $l^2$ norm of a given multiplicative coalescent is finite at any given time. The square of the $l^2$ norm is typically denoted by $S_2$ or $S$ in the literature (see also the definitions preceding the statement of Theorem \ref{the_finiteness_of_fourth_moment}). 
So the above mentioned bound from \cite{Konarovskyi:MC:2020} could be written as $\E( S_2(t) )^2 = \E S^2(t)< \infty$, $t \geq 0$.
Prior to our work, there was no study of higher moments of $S$, or moments of other \MC\ norms (see Lemma \ref{lem_finitenes_of_expected_S_n} or the proof of Proposition \ref{pro_finiteness_of_fourth_moment}), in the literature.
While some of the notation and concepts from our previous article will be initially recalled, this paper is self-contained (in particular, it does not require familiarity with~\cite{Konarovskyi:MC:2020}). 

The multiplicative coalescent is a Markov process on the space $\cvd$ of all infinite sequences $\bx=(x_1,x_2,\dots)$ with $x_1\geq x_2\geq \dots\geq 0$ and $\sum_{ j } x_j^2<\infty$ equipped with $l^2$-norm $\|\cdot \|$. It describes the evolution of masses of countably many blocks evolving according to the following dynamics:
\begin{align*}
  & \mbox{each pair of blocks of mass $x_i$ and $x_j$ merges at rate $x_ix_j$}\\
  & \mbox{into a single block of mass $x_i+x_j$.}
\end{align*}
The process was introduced by David Aldous in~\cite{Aldous:1997} and is a Feller process in $\cvd$ (see Proposition~5 ibid.). The main focus in~\cite{Aldous:1997} was on the construction of a particular eternal version (which is parametrized by $\R $) as the scaling limit of the (component sizes of the) near-critical classical random graphs. Its marginal distribution was characterized as the law of the ordered vector of excursion lengths above past minima of a Brownian motion with parabolic drift.  A year later in~\cite{Aldous:1998}, Aldous and the second author showed that other 
essentially different eternal versions of \MC\ exist,  and gave their full characterization, also via excursion theory.  Extremal eternal \MC s frequently appear as universal scaling limits in various random graph models~\cite{Addario_Berry:2012,Aldous:1998,Aldous:2000,Bhamidi:2014,Bhamidi:2015,Bhamidi:2010,Bhamidi:2012,Joseph:2014,Nachmias:2010,Riordan:2012,Turova:2013}.
However a number of fundamental properties of the \MC s are still not well understood. 

In the sequel we mostly rely on the notation from~\cite{Aldous:1997,Aldous:1998}. 
%In particular, $\cvd$  is a subset of $l^2$ composed of infinite vectors with non-negative components in non-increasing order, and if $\bx$ is a vector in $l^2$ or $\cvd$, then $\|\bx\|$ is its $l^2$-norm.
We reserve the notation $\bX:=(\bX(t), t\geq 0)$ for any \MC\ process, where its initial state will be clear from the context.
Recall that $\bX(t)=(X_1(t),X_2(t),\ldots)$, where $X_j(t)$ is the size of the $j$th largest component at time $t$.
We also denote by $(\bX^*(t), t\in \mathbb{R})$ the standard Aldous' \MC. This and other ``eternal coalescents'' are in fact entrance laws, rather than Markov processes, as they satisfy 
 $\lim_{t\to -\infty} \sum_j (X_j(t))^2 \to 0$.

For any $t$ such that $\bX(t)$ is defined, and any integer $k$ let
$$
S_k(t) := \sum_i (X_i(t))^k.
$$
The natural state space for \MC s is $\cvd$ (all the non-constant eternal \MC s take values in $\cvd\setminus l^1$).
If $\bx \in l^2$ let $\ord(\bx)\in \cvd$ be the infinite vector obtained by listing all the components of $\bx$ in non-increasing order. 
In the sequel we will frequently denote by $\|\bx\|$ the  $l^2$ norm of $\bx$.
Note that then clearly  $\|\bx\|$ = $\|\ord(\bx)\|$, and also that $S (t)\equiv S_2 (t)= \| \bX(t)\|^2$.

The first main result of this paper is the finiteness of all moments of $S$.
\begin{theorem} %finiteness of fourth moment
  \label{the_finiteness_of_fourth_moment}
  Let $\bX(t)$, $t\geq 0$, be a multiplicative coalescent started from $\bx \in \cvd$. Then for every $n \in \N$ and $t\geq 0$ we have
  \[
    \E \|\bX(t)\|^n = \E (S_2(t))^{n/2} < +\infty.
  \]
\end{theorem}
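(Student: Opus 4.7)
My plan. The main technical obstacle, to be addressed upfront, is that the naive generator/ODE approach does not close. For $F(\bx)=S_2(\bx)^m$, expanding $(S_2+2x_ix_j)^m$ gives
\[
\mathcal{L}S_2^m(\bx) \;=\; \sum_{k=1}^m \binom{m}{k}\,2^{k-1}\, S_2^{m-k}\bigl(S_{k+1}^2 - S_{2k+2}\bigr) \;\le\; C_m\, S_2^{m+1},
\]
where the last inequality uses the elementary bound $S_{k+1}(\bx)\le \|\bx\|^{\,k+1}$. Via Dynkin this yields $\tfrac{d}{dt}\E S_2(t)^m \le C_m\,\E S_2(t)^{m+1}$, so the $(m+1)$-st moment is needed to control the $m$-th. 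The whole point will be to break this dependence by obtaining an estimate that depends only on $\|\bx\|^2$.

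To obtain such an estimate I would use the standard graphical representation: $\bX(t)$ is realised as the ordered vector of connected-component masses in the random graph $G_t$ on $\mathbb{N}$, where edges $\{i,j\}$ appear independently with probability $1-e^{-tx_ix_j}\le t x_i x_j$. In this representation $S_2(t)=\sum_{i,j}x_ix_j\mathbf{1}\{i\sim_t j\}$, so
\[
\E S_2(t)^m \;=\; \sum_{(i_k,j_k)_{k=1}^m}\prod_{k=1}^m x_{i_k}x_{j_k}\cdot\mathbb{P}\bigl(i_k\sim_t j_k,\ k=1,\dots,m\bigr).
\]
I would dominate the joint connection probability by a sum over spanning tree/path \emph{witnesses} for the $m$ required connections, each contributing a product $\prod(1-e^{-tx_ux_v})\le\prod tx_ux_v$. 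The resulting weighted walk/tree sums are of resolvent type; for $t\|\bx\|^2<1$ they are dominated by quantities coming from $(I-tA)^{-1}$ with $A_{uv}=x_ux_v$, and the elementary identity $Ax=\|\bx\|^2\,x$ reduces everything to explicit functions of $\|\bx\|^2$ and $t$ (yielding, e.g., $\E S_2(t)\le \|\bx\|^2/(1-t\|\bx\|^2)$ for $t$ in this range).

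To extend from small $t$ to arbitrary $t\ge 0$ I would combine this with the Markov property and a localisation: let $\tau_M=\inf\{s:S_2(s)>M\}$, bound $\E S_2(t\wedge\tau_M)^m$ iteratively on sub-intervals $[t_i,t_{i+1}]$ with $(t_{i+1}-t_i)M<1$, and pass $M\to\infty$ using the a.s.\ monotonicity of $s\mapsto S_2(s)$ together with the $n=1$ case for tightness of $S_2$ on $[0,t]$. For $\bx\notin l^1$, I would finally truncate to $\bx^{(N)}=(x_1,\dots,x_N,0,\dots)\in l^1$, use the natural coupling in which the components for $\bx^{(N)}$ are contained in the components for $\bx$, and conclude via Fatou once the preceding bound is uniform in $N$.

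The hard part will be the control of the joint connection probability $\mathbb{P}(i_k\sim_t j_k\ \forall k)$. Because the connecting paths may share edges, these events are positively correlated and the van den Berg--Kesten inequality does not apply directly; a tree-graph/lace-expansion-type inclusion-exclusion is needed. The delicate point is to carry out the combinatorics so that the resulting weighted sums close in terms of $\|\bx\|^2$ alone, with constants depending only on $m$, and so that the small-time bound iterates cleanly into a bound valid for all $t\ge 0$.
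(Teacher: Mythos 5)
Your overall plan shares one genuine ingredient with the paper (the percolation/graphical representation and the observation that for $t\|\bx\|^2<1$ pairwise connection probabilities reduce to resolvent-type quantities, giving $\p(i\sim j)\le x_i x_j t/(1-t\|\bx\|^2)$), but the two steps that actually carry the proof are either missing or do not close.

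\textbf{The small-time step.} You expand $\E S_2(t)^m$ as a sum over $m$-tuples of \emph{pairs} $(i_k,j_k)$ and then face the joint probability $\p(i_k\sim j_k,\ k\le m)$, which you flag as the ``hard part'' and wave at via a tree-graph/lace-expansion inclusion--exclusion. This is precisely the wrong quantity to attack head-on, and your comment that BK ``does not apply directly'' is a symptom of it. The paper instead bounds $\E S_n(t)=\sum_{i_1,\dots,i_n}x_{i_1}\cdots x_{i_n}\p(i_1\sim\cdots\sim i_n)$ --- a \emph{single} connectivity event --- and there the BK/Reimer inequality (in the infinite-product form of Arratia--Garibaldi--Hales) \emph{does} apply, after decomposing the event over the combinatorial structure of a minimal spanning tree containing $i_1,\dots,i_n$ (leaf vs.\ interior vertex $i_{N+1}$, and so on). This gives the key Proposition~\ref{pro_estimate_for_connected_componnents} and then Lemma~\ref{lem_finitenes_of_expected_S_n}, $\E S_n(t)\lesssim \|\bx\|^n/(1-t\|\bx\|^2)^{2n-3}$. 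The passage from $\E S_n$ to $\E S_2^m$ is then done not combinatorially at all, but via the generator and a localization lemma for local martingales (Lemma~\ref{lem_recurrent_property}), yielding an intricate double induction first on $\E S_nS_m$ and then on $\E S_2^{k-1}S_mS_l$. Your proposal contains no substitute for this martingale bootstrap, and without it I don't see how your spanning-tree/lace combinatorics would ever close in terms of $\|\bx\|^2$ alone: for $m\ge 2$ the pairs can land in different components, share vertices, and share connecting paths, and a workable inclusion--exclusion there is a nontrivial project in its own right, not a detail to be filled in.

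\textbf{The extension to all $t\ge 0$.} Your localization-and-iteration scheme does not close. Fix $m$. If you subdivide $[0,t]$ into $K\sim Mt$ intervals of length $<1/M$ and apply the small-time bound on each, conditionally given $\FF_{t_i}$ and on the event $\{\tau_M>t_i\}$ you get at best an estimate of the form $\E\bigl[S_2(t_{i+1})^m\mid\FF_{t_i}\bigr]\le C_m\,S_2(t_i)^m$ with $C_m>1$ independent of $M$, so the iteration gives $\E S_2(t\wedge\tau_M)^m\le C_m^{\,cMt}\|\bx\|^{2m}$, which blows up as $M\to\infty$. Monotone convergence/Fatou with $\tau_M\nearrow\infty$ cannot rescue a bound that diverges in $M$. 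The paper avoids this entirely by a ``grinding'' and stochastic-domination argument: replace $\bx$ by $\bx^g$ in which the first $m$ blocks are each split into $M$ equal pieces, so that $t\|\bx^g\|^2<1/2$ and the whole window $[0,2t]$ is ``small time'' for $\bx^g$; on the positive-probability event $A$ that those crumbs reconnect by time $t$, the Markov property and the monotonicity of the $\ell^2$ norm under adding edges give $\|\bX(t;\bx)\|\preceq\|\tilde{\bX}(t;\bX(t;\ord(\bx^g)))\|$, whence $\E\|\bX(t;\bx)\|^n<\infty$. If you want to keep your route, this is the device you would need to import. Your additional truncation of $\bx$ to $\bx^{(N)}\in l^1$ is unnecessary: the small-time estimate already holds for all $\bx\in l^2$.
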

The proof of this general statement has an interesting recursive structure. The estimates on $\E S_n(t)$ and 
$\E S_n(t)S_m(t)$
obtained along the way (see Section~\ref{sub:argument_for_small_times}) are of independent interest.

We next recall in more detail the excursion characterization of the \MC\ entrance laws. For this we introduce the set of parameters
\[
  \cI=\left((0,\infty)\times \R \times \cvt\right)\cup \left(\{ 0 \}\times \R\times \Lspec\right),
\]
where $\Lspec=\cvt \setminus \cvd$, and the processes 
\begin{align*}
  \tilde{W}^{\kappa,t}(s)&= \sqrt{ \kappa }W(s)+t s- \frac{1}{ 2 }\kappa s^2, \quad s\geq 0,\\
  V^{\bc}(s)&= \sum_{ i=1 }^{ \infty } \left( c_i \I_{\left\{ \xi_i\leq s \right\}} -c_i^2s\right), \quad s\geq 0,\\
  W^{\kappa,t,\bc}(s)&= \tilde{W}^{\kappa,t}(s)+V^{\bc}(s), \quad s\geq 0,\\
  B^{\kappa,t,\bc}(s)&= W^{\kappa,t,\bc}(s)-\min\limits_{ r \in [0,s] }W^{\kappa,t,\bc}(r), \quad s\geq 0,
\end{align*}
where $W$ denotes a standard Brownian motion and $(\xi_i)_{i\geq 1}$ is a family of independent exponentially distributed random variables, where $\xi_i$ has rate $c_i$, for each $i\geq 1$.
Note that the process $V^{\bc}$ is well-defined due to $\|\bc\|_3^3:=\sum_{ i=1 }^{ \infty } c_i^3<\infty$. 
 It is well-known that to any extreme eternal (non-constant) \MC\ corresponds a unique $(\kappa,\tau,\bc) \in \cI$ such that this entrance law evaluated at time $t$ is the same as
the decreasingly ordered vector of excursion lengths of $B^{\kappa,t-\tau,\bc}$ (see, e.g.~\cite[Theorem~3]{Aldous:1998}).
Our second goal is to prove the finiteness of the second moment of $l^2$ norm for all the extreme eternal \MC.
The same statement for the standard version was already derived by Aldous in~\cite{Aldous:1997} in two different ways: via excursion theory, and via weak convergence. 

We recall that an excursion $\gamma$ of a non-negative process $B$ is a time interval $[l(\gamma),r(\gamma)]$ such that $B(l(\gamma))=B(r(\gamma))=0$ and $B(s)>0$ for $s \in (l(\gamma),r(\gamma))$. 

\begin{theorem} %finiteness of the second moments of EMC
  \label{the_finiteness_of_the_second_moments_of_emc}
  Let $\Gamma^{\kappa,t,\bc}$ be the set of excursions of $B^{\kappa,t,\bc}$, and let $|\gamma|$ be the length of an excursion $\gamma$. Then for every $(\kappa,t,\bc) \in \cI$, one has
  \[
    \E\sum_{ \gamma \in \Gamma^{\kappa,t,\bc} }   |\gamma|^2<\infty.
  \]
\end{theorem}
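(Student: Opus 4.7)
The plan is to express the left-hand side via an excursion-theoretic double integral, and to reduce the case of general $(\kappa, t, \bc) \in \cI$ to the finitely-supported case through truncation. The crucial step will be to establish a uniform-in-truncation bound.

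First, by Fubini's theorem, $\sum_\gamma |\gamma|^2 = 2\int_0^\infty\int_u^\infty \I\{B^{\kappa,t,\bc}(r) > 0 \text{ for all } r \in [u,v]\}\,dv\,du$, so that
\[
  \E \sum_{\gamma \in \Gamma^{\kappa,t,\bc}} |\gamma|^2 = 2\int_0^\infty \int_u^\infty \p\bigl(\min_{r\in [u,v]} W^{\kappa,t,\bc}(r) > \min_{r \in [0,u]} W^{\kappa,t,\bc}(r)\bigr)\, dv\, du.
\]
By the Markov property at time $u$, the integrand becomes $\E[\p(\inf_{s \leq v-u} \widetilde{W}(s) > -B^{\kappa,t,\bc}(u) \mid \FF_u)]$, where $\widetilde{W}(s) := W^{\kappa,t,\bc}(u+s) - W^{\kappa,t,\bc}(u)$ has, conditional on $\FF_u$, the law of $W^{\kappa, t - \kappa u, \bc^{(u)}}$; here $\bc^{(u)}$ collects those $c_i$ for which $\xi_i > u$.

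Second, I would approximate by finitely-supported $\bc^{(N)} := (c_1, \ldots, c_N, 0, 0, \ldots)$, with $\kappa^{(N)} := \kappa \vee \frac{1}{N}$, so that $(\kappa^{(N)}, t, \bc^{(N)}) \in \cI$ for every $N$. A standard argument based on $\sum_{i>N} c_i^3 \to 0$ shows that $W^{\kappa^{(N)}, t, \bc^{(N)}} \to W^{\kappa, t, \bc}$ almost surely, uniformly on compact sets, from which one deduces convergence in $\cvd$ of the ordered excursion-length vectors. By Fatou's lemma it thus suffices to bound $\E\sum_{\gamma \in \Gamma^{\kappa^{(N)}, t, \bc^{(N)}}} |\gamma|^2$ uniformly in $N$. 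For each fixed $N$, this sum equals $\E \|\bX^{(N)}(t)\|^2$, where $\bX^{(N)}$ is the extreme eternal multiplicative coalescent associated to $(\kappa^{(N)}, 0, \bc^{(N)})$; since $\bX^{(N)}$ is Markov with state $\bX^{(N)}(t_0) \in \cvd$ at any fixed earlier time $t_0 < t$, Theorem~\ref{the_finiteness_of_fourth_moment} applied conditionally yields $\E[\|\bX^{(N)}(t)\|^2 \mid \bX^{(N)}(t_0)] < \infty$ almost surely.

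The principal obstacle is to obtain the \emph{uniform}-in-$N$ version of the above bound. I expect that this requires a quantitatively explicit version of the estimates in Theorem~\ref{the_finiteness_of_fourth_moment}, of the form $\E[\|\bX(t)\|^2 \mid \bX(t_0) = \bx] \leq \Phi(\|\bx\|_2^2, t - t_0)$ with tractable $\Phi$; combined with the entrance-law property $\|\bX^{(N)}(t_0)\|^2 \to 0$ as $t_0 \to -\infty$, this would yield the desired uniform bound after an appropriate choice of $t_0$. Alternatively, one could bypass Theorem~\ref{the_finiteness_of_fourth_moment} and estimate the double integral above directly, by exploiting the semimartingale structure of $W^{\kappa,t,\bc}$: Doob and Burkholder--Davis--Gundy inequalities applied to its martingale part, combined with the strong downward drift which is parabolic ($-\kappa s^2/2$) when $\kappa > 0$, and which arises from the aggregate compensator $-(\sum_i c_i^2)s$ when $\kappa = 0$ (formally infinite, precisely because $\bc \in l^3 \setminus l^2$). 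I expect the case $\kappa = 0$ to be the most delicate, since the absence of a Brownian component forces the argument to rely entirely on the jump structure, and uniformity after truncation will necessitate a careful partition of $\bc$ into ``large'' and ``small'' entries handled by distinct analytic means.
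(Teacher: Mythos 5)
Your truncation-and-Fatou framework is a sensible starting point, and for negative coalescent times $t<0$ it essentially coincides with the paper's Lemma~\ref{lem_negative_times}, where the uniform bound comes directly from Lemma~\ref{lem_finitenes_of_expected_S_n} because the initial data $\bx^n$ is deterministic with $\|\bx^n\|\to 0$, and the elapsed time $\frac{1}{\|\bx^n\|^2}+t$ satisfies $(\frac{1}{\|\bx^n\|^2}+t)\|\bx^n\|^2 = 1+t\|\bx^n\|^2 < 1$. But there is a genuine gap precisely at the step you yourself flag as ``the principal obstacle,'' and it is not resolved by a ``quantitatively explicit version'' of Theorem~\ref{the_finiteness_of_fourth_moment}. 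The explicit small-time bound is Lemma~\ref{lem_finitenes_of_expected_S_n}, namely $\E[\|\bX(t)\|^2\,|\,\bX(t_0)=\bx] \le D_2\|\bx\|^2/(1-(t-t_0)\|\bx\|^2)$, valid only when $(t-t_0)\|\bx\|^2<1$. For an eternal coalescent evaluated at a past time $t_0$, the state $\bX^{(N)}(t_0)$ is \emph{random}, and as $t_0\to-\infty$ one does have $\|\bX^{(N)}(t_0)\|^2\to 0$, but the product $(t-t_0)\|\bX^{(N)}(t_0)\|^2$ does not shrink: heuristically $\|\bX(t_0)\|^2\asymp 1/(-t_0)$, so $(t-t_0)\|\bX(t_0)\|^2\to 1^-$ when $t<0$ but $\to 1^+$ when $t>0$. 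The denominator $1-(t-t_0)\|\bX^{(N)}(t_0)\|^2$ therefore vanishes or becomes negative with non-vanishing probability, and no choice of $t_0$ yields a finite, let alone $N$-uniform, bound. This is exactly the obstruction the paper acknowledges in the remark following Lemma~\ref{lem_negative_times}: for $t\ge 0$ the extra positive drift of $W^{\kappa,t,\bc}$ near $s=0$ lengthens early excursions, and the negative-time argument does not extend.

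Your fallback --- ``bypass Theorem~\ref{the_finiteness_of_fourth_moment} and estimate the double integral directly via BDG and drift'' --- gestures in the right direction but omits the structural ideas that make the paper's $t\ge 0$ argument work. For $\kappa>0$ the paper splits $\Gamma^{\kappa,t,\bc}$ at the deterministic time $t'=2t/\kappa$ (where the parabola becomes negative), bounds the traversing excursion via an $n$-th moment estimate for the hitting time $\sigma_{\kappa,t,\bc}$ (Proposition~\ref{pro_finiteness_of_stopping_time}, whose proof needs Bernstein-type martingale inequalities applied to $V^\bc$, not merely BDG), and reduces the excursions past $t'$ to the negative-time lemma by a clever time-shift together with a Bernoulli thinning of the jump intensities $c_i$. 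For $\kappa=0$ (and $\bc\in l^3\setminus l^2$), the paper uses a completely different mechanism: it realises $\Gamma^{\kappa,t+a,\bc}$ via the COL operation on an eternal coalescent at a \emph{negative} time $u=-t$ with finitely many $c_i$'s removed, then rules out the possibility that colouring with intensity $c^*$ gives an infinite second moment while colouring with intensity $\alpha c^*$ gives a finite one. Your expectation that $\kappa=0$ is hardest ``since the absence of a Brownian component forces the argument to rely entirely on the jump structure'' is therefore somewhat inverted: the coloring argument for $\kappa=0$ is in fact shorter than the $\kappa>0$ argument. The decisive split is $t<0$ versus $t\ge 0$, and for $t\ge 0$ the proposal as written does not contain the ideas needed to close the uniform-bound gap.
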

	Due to the excursion representation recalled above, it is clear that this statement also says that any extremal eternal \MC\ has a finite second moment at any given time. 
Surprisingly,  our argument for Theorem \ref{the_finiteness_of_the_second_moments_of_emc} with $t<0$ 
is short (and straight-forward), while we had to work much harder to prove the theorem for $t\geq 0$.

\subsection{Graphical construction}%
\label{sub:graph_construction}
We first describe a useful graphical construction and make a link with \cite{Konarovskyi:MC:2020}. 
If $n\in \N$ then $[n]=\{1,2,\ldots,n\}$. 
Here and below the symbol $\bA$ denotes an upper-triangular matrix (or equivalently, a two-parameter family) of 
i.i.d.~exponential (rate $1$) random variables.
While the restricted merging (relation $R$) was typically non-trivial in~\cite{Konarovskyi:MC:2020}, in the present setting we only use the so-called "maximal relation $R^*$".
In other words, there is no restriction on the multiplicative merging, so the family of
evolving random graphs denoted by
$(G_t(\bx;\bA,R^*))_{t,\bx}$ in~\cite{Konarovskyi:MC:2020}
is equal in law to the family of non-uniform random graphs from~\cite{Aldous:1997,Aldous:1998}, also called {\em inhomogenous random graphs}, or {\em rank-1 model} in more recent literature \cite{Bollobas:2007,Bhamidi:2010,Bhamidi:2012}.  
This family of evolving random graphs is a direct continuous-time analogue of Erd\H{o}s-R\'enyi-Stepanov model.
In this general setting there could be (countably) infinitely many particles in the configuration, and the particle masses are arbitrary positive (square-summable) reals. 

We therefore omit $R^*$ from future notation, and frequently we will omit $\bA$ as well.
Let us now fix $\bx \in l^2$ and $t>0$, and describe a somewhat different construction from the one in~\cite{Konarovskyi:MC:2020}.
Set $\N^2_<:=\left\{ (i,j):\ i<j,\ i,j \in \N \right\}$ and
\[
  \Omega^0=\{ 0,1 \}^{\N^2_<}.
\]
We also define the product $\sigma$-field $\FF^0=2^{\Omega^0}$ and the product measure
\[
  \p_{\bx,t}^0=\bigotimes_{ i<j } \p_{i,j},
\]
where $\p_{i,j}$ is the law of a Bernoulli random variable with success probability $\p_{i,j}\{1\}=\p\left\{ \bA_{i,j}\leq x_ix_jt \right\}$.
If $i>j$ we set $\omega_{i,j}:=\omega_{j,i}$, and we also set $\omega_{i,i}:=1$ for all $i\in \N$.
Elementary events from $\Omega^0$ will specify a family of open edges in $G_t(\bx;\bA)$.
More precisely, 
given $\omega=(\omega_{i,j})_{i<j} \in \Omega^0$,
a pair of vertices $\{i,j\}$ is connected in $G_t(\bx;\bA)(\omega)$ by an edge if and only if $\omega_{i,j}=1$. 
In other words, $\p_{\bx,t}^0$ is an ``inhomogeneous percolation process on the complete infinite graph $(\N,\{\{i,j\}:\ i,j\in \N\})$'' (we include the loops connecting each $i$ to itself on purpose). It should be clear (though not important for the sequel) that the law of thus obtained random graph $G_t(\bx;\bA)$ is the same (modulo loops $\{i,i\}$) as the law of $G_t(\bx;\bA,R^*)$ constructed in~\cite{Konarovskyi:MC:2020}. In particular, the ordered masses of the connected components of 
$G_t(\bx;\bA)$ evolve in $t$ as the \MC\ started from $\ord(\bx)\in \cvd$. \\
For two $i,j\in \N$ we write $\{i \leftrightarrow j\} = \{\{i,j\} \mbox{ is an edge of } G_t(\bx;\bA)\}$ and we  may also write it as $\{\{i,j\} \mbox{ is open}\}$.
% According to our identification, $i\leftrightarrow j$, $i<j$, provided $\omega_{i,j}=1$. 
We also write $\{i\sim j\}$ for the event that $i$ and $j$ belong to the same connected component of the graph $G_t(x;\bA)$. Then we have, $\omega$-by-$\omega$, that $i\sim j$ if and only if there exists a finite path of edges 
\[
  i=i_0\leftrightarrow i_1 \leftrightarrow \dots \leftrightarrow i_l=j.
\]
%connecting $i$ and $j$. 
%We henceforth write $\p\left\{ i \sim j \right\}=\p_{\bx,t}^0\left\{ i \sim j \right\}.$
As already argued, we can write
%Then one can trivially recognize 
\begin{equation}
\label{E:starpage2}
  \p\left( i \sim j \right)=\p_{\bx,t}^0\left( i \sim j \right),
\end{equation}
where $(\Omega, \FF,\p)$ is the underlying probability space and $G_t(\bx;\bA)$ is the above constructed random graph with vertices 
$\N$ and edges
$\{\{i,j\}\in \N^2: i \leftrightarrow j\}$.

\subsection{Disjoint occurence}%
\label{sub:disjoint_occurence}
Our argument partly relies on {\em disjoint occurrence}. We follow the notation from ~\cite{Arratia:2018}, since they work on infinite product spaces. We will use an analog of the van den Berg-Kesten inequality~\cite{vandenBerg:1985}, and also recall that the theorem cited from~\cite{Arratia:2018} is an analog of Reimer's theorem~\cite{Reimer:2000}.  
Given a finite family of events $A_k$, $k \in [n]$, from $\FF^0$ we define the event 
\[
\bsq\limits_{k=1}^n A_k=\{A_k,\, k \in [n], \mbox{ jointly occur for disjoint reasons}\}. 
\]
Readers familiar with percolation can skip the next paragraph and continue reading either at the statement of Lemma \ref{lem_rkb_inequality} or the start at Section~\ref{sec:some_auxiliary_statements}. 

Let for $\omega \in \Omega^0$ and $K \subset \N^2_<$
\[
  \Cyl(K,\omega):=\left\{ \bar{\omega}:\ \bar{\omega}_{i,j} =\omega_{i,j},\ (i,j) \in K\right\}
\]
be the {\it thin cylinder} specified through $K$.
Then the event
\[
  [A]_{K}:=\left\{ \omega:\ \Cyl(K,\omega) \subset A \right\}
\]
is the largest cylinder set contained in $A$, such that it is {\em free in the directions indexed by $K^c$}. Define
\[
  \bsq\limits_{k=1}^n A_k=A_1\sq\dots \sq A_n:=\bigcup_{ J_1,\dots,J_n } [A_1]_{J_1}\cap \dots \cap [A_n]_{J_n},
\]
where the union is taken over finite disjoint subsets $J_k$, $k \in [n]$, of $\N^2_<$.

Let $i_k,j_k \in \N$ and $i_k \not= j_k$, $k \in [n]$. Then we have clearly
\[
  \bsq_{k=1}^n\{ i_k\sim j_k \}=\left\{ 
  	i_k\sim j_k,\ k \in [n],\ \mbox{via mutually disjoint paths}
  \right\}.
\]

The following lemma  follows directly from Theorem~11~\cite{Arratia:2018}, but since the events in question are simple (and monotone increasing in $t$) this could be derived directly in a manner analogous to \cite{vandenBerg:1985}.

\begin{lemma} %RKB inequality
  \label{lem_rkb_inequality}
  For any $i_k,j_k \in \N$ and $i_k \not= j_k$, $k \in [n]$, we have
  \[
    \p_{\bx,t}^0\left( \bsq_{k=1}^n \{i_k\sim j_k\} \right)\leq \prod_{ k=1 }^{ n } \p\left(i_k\sim j_k \right).
  \]
\end{lemma}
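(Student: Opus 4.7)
The plan is to reduce the statement to the classical van den Berg--Kesten (BK) inequality via truncation to finitely many vertices. Since each $\{i \sim j\}$ is an increasing event in the independent-Bernoulli product space $(\Omega^0, \p_{\bx,t}^0)$, the standard BK machinery should apply once we pass through a suitable finite-dimensional approximation.

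First I would set up the approximation. For $N \in \N$, let $A_k^{(N)}$ be the event that $i_k$ and $j_k$ are joined by a path of open edges all of whose endpoints lie in $[N]$. Each $A_k^{(N)}$ is an increasing cylinder event depending only on the finitely many coordinates indexed by $[N]^2_<$. Clearly $A_k^{(N)} \uparrow A_k := \{i_k \sim j_k\}$ as $N \to \infty$, since any path witnessing $i_k \sim j_k$ is finite and hence is contained in $[N]$ for all $N$ sufficiently large. On the finite product space $\{0,1\}^{[N]^2_<}$ (with the independent heterogeneous Bernoulli marginals inherited from $\p_{\bx,t}^0$), the classical BK inequality applied to the increasing events $A_k^{(N)}$ yields
\[
\p_{\bx,t}^0\!\left(\bsq\limits_{k=1}^{n} A_k^{(N)}\right) \;\leq\; \prod_{k=1}^n \p_{\bx,t}^0\!\left(A_k^{(N)}\right).
\]

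Finally, I would pass $N \to \infty$. The right side tends to $\prod_k \p(i_k \sim j_k)$ by monotone convergence. For the left side, one checks that $\bsq_{k} A_k^{(N)} \uparrow \bsq_{k} A_k$: monotonicity in $N$ follows from $[A_k^{(N)}]_{J_k} \subseteq [A_k^{(N+1)}]_{J_k} \subseteq [A_k]_{J_k}$; for the reverse inclusion, given any $\omega \in \bsq_{k} A_k$ with disjoint witnessing sets $J_1,\dots,J_n$, each $J_k$ is a finite subset of $\N^2_<$, so for $N$ large enough that $i_k, j_k \in [N]$ for all $k$ and $\bigcup_k J_k \subset [N]^2_<$, the path from $i_k$ to $j_k$ forced by the open edges in $J_k$ lies inside $[N]$, whence $\omega \in [A_k^{(N)}]_{J_k}$ for each $k$. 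The only real obstacle is this limit verification; the finite BK step itself is classical and applies directly because the edges are independent (with possibly distinct success probabilities), so the standard inductive argument of \cite{vandenBerg:1985} goes through unchanged.
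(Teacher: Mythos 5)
Your proposal is correct and self-contained. The paper itself does not write out a proof: it invokes Theorem~11 of Arratia--Garibaldi--Hales (\cite{Arratia:2018}), a Reimer-type disjoint-occurrence inequality valid for arbitrary (not necessarily monotone) events on infinite product spaces, and merely remarks in passing that the result ``could be derived directly in a manner analogous to \cite{vandenBerg:1985}.'' Your argument is precisely that direct derivation: you exploit the monotonicity of the connection events $\{i_k\sim j_k\}$ to truncate to the finite box $[N]$, apply the classical van den Berg--Kesten inequality for increasing events on a finite product of (heterogeneous) Bernoulli coordinates, and pass to the limit. The trade-off is clear: the paper's route is shorter on the page but leans on a heavy external theorem covering generality not actually needed here; yours is longer but elementary and exposes exactly which hypotheses (monotonicity, finiteness of the witness sets $J_k$) are doing the work. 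Your verification that $\bsq_k A_k^{(N)}\uparrow\bsq_k A_k$ is the only nontrivial step and it is handled correctly; the key observation, that $\omega\in[A_k]_{J_k}$ forces the edges of $J_k$ open under $\omega$ to already contain an $i_k$--$j_k$ path (seen by closing all coordinates outside $J_k$), is implicit in your phrase ``the path $\ldots$ forced by the open edges in $J_k$'' and could be spelled out a touch more. One small remark for polish: since $A_k^{(N)}$ depends only on coordinates in $[N]^2_<$, one has $[A_k^{(N)}]_{J}=[A_k^{(N)}]_{J\cap[N]^2_<}$, so the witness sets may be taken inside $[N]^2_<$ without loss; this is what justifies identifying the $\bsq$ computed in $\Omega^0$ with the one computed on the finite coordinate space where you invoke BK.
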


{\bf A warning about notation.} We shall denote by $\Lambda_n$ the set of all bijections $\sigma:[n]\to[n]$, since the symbols $S$ and $S_n$, typically used to denote the symmetric group, have been already reserved.
%following the \MC\  tradition, this symbol is already reserved here for the (infinite) sum of squared component masses process. 

%{\bf The paper is organized as follows.} Theorem \ref{the_finiteness_of_fourth_moment}, stated and proved in Section~\ref{the_finiteness_of_fourth_moment}, is the central result of the paper.
%Section~\ref{sec:some_auxiliary_statements} contains some general estimates, upper bounds for the probabilities of inter-connections, which could be of independent interest.
%Section~\ref{sec:conseq_excursions} is the study of finiteness of the second moment of $\|X(\cdot)\|$ in the setting where $X$ is any extreme eternal \MC.  Due to the well-known correspondence between these entrance laws and excursion lengths of a certain class of L\'evy-type processes, these novel results are also stated in the excursion context.

{\bf Structure of the paper.} The remainder of the paper is organized as follows:
Section~\ref{sec:some_auxiliary_statements} is devoted to some general estimates (the upper bounds for the probabilities of inter-connections for the graphs introduced in Section \ref{sub:graph_construction} could be of independent interest),
Theorem \ref{the_finiteness_of_fourth_moment} is proved in Section~\ref{sec:finiteness_of_first_moment},
%and is the central result of the paper. 
and the proof of Theorem~\ref{the_finiteness_of_the_second_moments_of_emc} is given in Section~\ref{sec:conseq_excursions}.

\section{Some auxiliary statements}%
\label{sec:some_auxiliary_statements}
We work on $(\Omega, \FF,\p)$ and with the random graph constructed in (\ref{E:starpage2}).
Let us recall the following easy lemma, known already to Aldous and Limic
(see p.~46 in \cite{Aldous:1998} or expression (2.2) on p.~10 in \cite{Limic:1998:thesis}, or 
 for example~\cite{Konarovskyi:MC:2020} for details).

\begin{lemma} %connection of two paths
  \label{lem_connection_of_two_paths}
  For every $\bx=(x_k)_{k\geq 1} \in l^2$, $t \in (0,1/\|\bx\|^2)$ and $i \not= j$
  \[
    \p\left( i\sim j \right)\leq \frac{ x_ix_jt }{ 1-t \|\bx\|^2 }.
  \]
\end{lemma}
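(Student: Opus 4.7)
The plan is to bound $\p(i \sim j)$ by a union bound over all self-avoiding paths (or simply paths) joining $i$ to $j$ in the graph $G_t(\bx;\bA)$, and then sum the resulting geometric series. This is a classical "path-expansion" estimate analogous to the well-known $O(n p)$-style bound for the expected number of open paths in subcritical percolation on the complete graph.

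More concretely, first I would note that for any two distinct vertices $a,b$ we have
\[
\p(\{a,b\} \text{ is open}) = \p(\bA_{a,b} \leq x_a x_b t) = 1 - e^{-x_a x_b t} \leq x_a x_b t.
\]
Next, the event $\{i \sim j\}$ is the union, over $\ell \geq 1$ and over tuples $(i_1,\ldots,i_{\ell-1})$ of vertices (distinct from one another and from $i,j$), of the event that the edges $\{i,i_1\}, \{i_1,i_2\},\ldots,\{i_{\ell-1},j\}$ are all open. Since these edges are indexed by distinct pairs, the corresponding Bernoulli variables are independent under $\p^0_{\bx,t}$, so by the union bound
\[
\p(i \sim j) \leq \sum_{\ell=1}^{\infty} \sum_{i_1,\ldots,i_{\ell-1}} (x_i x_{i_1} t)(x_{i_1} x_{i_2} t) \cdots (x_{i_{\ell-1}} x_j t),
\]
where, by positivity of the summands, I may relax to sums over all (not necessarily distinct) indices $i_1,\ldots,i_{\ell-1} \in \N$.

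Now a telescoping of the product yields
\[
\prod_{k=0}^{\ell-1} x_{i_k} x_{i_{k+1}} = x_i x_j \prod_{k=1}^{\ell-1} x_{i_k}^2,
\]
with $i_0 = i$ and $i_\ell = j$, so the inner sum factorizes and equals $\|\bx\|^{2(\ell-1)}$. Thus
\[
\p(i \sim j) \leq x_i x_j \sum_{\ell=1}^{\infty} t^{\ell} \|\bx\|^{2(\ell-1)} = x_i x_j t \sum_{\ell=0}^{\infty} (t \|\bx\|^2)^\ell = \frac{x_i x_j t}{1 - t \|\bx\|^2},
\]
where the geometric series converges by the assumption $t < 1/\|\bx\|^2$.

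There is no real obstacle here; the only point requiring a small amount of care is the independence of the edge indicators along a fixed path (needed to turn the joint probability into a product), which is immediate from the product structure of $\p^0_{\bx,t}$ since the edges $\{i_k, i_{k+1}\}$ used along the path correspond to distinct unordered pairs. The inequality $1 - e^{-x} \leq x$ for $x \geq 0$ then gives the bound in its clean form, and relaxing the sum to all tuples (instead of only distinct ones) is the natural trade-off to obtain the closed-form geometric series bound.
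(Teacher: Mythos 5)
Your proof is correct and is exactly the standard path-expansion argument: the paper itself does not reprove this lemma but cites Aldous--Limic (1998, p.~46), Limic's thesis (expression (2.2), p.~10), and Konarovskyi--Limic (2020), all of which establish it by the same union bound over self-avoiding paths followed by summing the geometric series. Nothing is missing; the key points — independence of edge indicators along a simple path under the product measure, the bound $1-e^{-u}\leq u$, and relaxing the sum to non-distinct interior vertices to get the closed form — are all handled.
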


The goal of this section is to obtain analogous estimates for the probability of connection for $n$-tuples of vertices. 

\begin{proposition} %estimate for connected componnents
  \label{pro_estimate_for_connected_componnents}
  For every $n \in \N$ there exists a constant $C_n$ such that for every $\bx=(x_k)_{k\geq 1} \in l^2$ and $t \in (0,1/\|\bx\|^2)$
  \begin{equation} %main estimate of the probability
  \label{equ_main_estimate_of_the_probability}
    \p\left( i_1\sim i_2 \sim \dots\sim i_n\right)\leq C_n\frac{ x_{i_1}x_{i_2}\dots x_{i_n}t^{n/2} }{ \left(1-t \|\bx\|^2\right)^{2n-3} }
  \end{equation}
where $i_k$, $k \in [n]$, is an arbitrary collection of $n$ distinct indices (natural numbers).
\end{proposition}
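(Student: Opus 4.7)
The plan is to control $\p(i_1\sim\dots\sim i_n)$ through a Steiner-tree decomposition combined with the BK-type inequality (Lemma~\ref{lem_rkb_inequality}) and the pairwise bound (Lemma~\ref{lem_connection_of_two_paths}), which already handles the base case $n=2$. The exponent $2n-3$ should emerge naturally from the classical structural bound on the size of a Steiner tree with $n$ labelled terminals.

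On the event $\{i_1\sim\dots\sim i_n\}$ one can pick a spanning tree $T$ of the connected component containing these vertices, extract the minimal subtree $T^*$ of $T$ spanning $\{i_1,\dots,i_n\}$, and then contract every degree-$2$ vertex of $T^*$ that does not belong to $\{i_1,\dots,i_n\}$. The outcome is a finite tree $\tau$ on a vertex set $V=\{i_1,\dots,i_n\}\cup S$, in which every Steiner vertex $s\in S$ satisfies $\deg_\tau(s)\geq 3$. A degree-sum computation ($\sum_v\deg_\tau(v)=2|E(\tau)|$, with $\sum_r\deg_\tau(i_r)\geq n$ and $\sum_s\deg_\tau(s)\geq 3|S|$) forces $|S|\leq n-2$ and hence $|E(\tau)|\leq 2n-3$. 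Each edge of $\tau$ is realised by a distinct contracted open path inside $T^*$, and by construction these witness paths use pairwise disjoint edges of the random graph, which is exactly what Lemma~\ref{lem_rkb_inequality} requires.

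Applying Lemma~\ref{lem_rkb_inequality} to this decomposition and union-bounding over the topological type of $\tau$ together with the labels of the Steiner vertices,
\begin{equation*}
  \p(i_1\sim\dots\sim i_n)\leq \sum_{V,\tau}\prod_{\{v,v'\}\in E(\tau)} \p(v\sim v')
  \leq \sum_{V,\tau}\frac{t^{|E(\tau)|}}{(1-t\|\bx\|^2)^{|E(\tau)|}}\prod_{v\in V}x_v^{\deg_\tau(v)},
\end{equation*}
where Lemma~\ref{lem_connection_of_two_paths} was used on each pair. A free sum over a single Steiner vertex of degree $d\geq 3$ contributes $\sum_s x_s^d=\|\bx\|_d^d\leq \|\bx\|^d$ (since $\|\bx\|_p\leq\|\bx\|_2$ for $p\geq 2$), and for each terminal $x_{i_r}^{\deg_\tau(i_r)}\leq x_{i_r}\|\bx\|^{\deg_\tau(i_r)-1}$. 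The total power of $\|\bx\|$ collected this way equals $\sum_r(\deg_\tau(i_r)-1)+\sum_s\deg_\tau(s)=2|E(\tau)|-n$. Combining with the $t^{|E(\tau)|}$ factor and the hypothesis $t\|\bx\|^2<1$, one obtains $t^{|E(\tau)|}\|\bx\|^{2|E(\tau)|-n}=t^{n/2}(t\|\bx\|^2)^{|E(\tau)|-n/2}\leq t^{n/2}$ (valid because $|E(\tau)|\geq n-1\geq n/2$ for $n\geq 2$), while $|E(\tau)|\leq 2n-3$ and $1-t\|\bx\|^2\in(0,1)$ give $(1-t\|\bx\|^2)^{-|E(\tau)|}\leq(1-t\|\bx\|^2)^{-(2n-3)}$. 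The number of topological types of $\tau$ is bounded by a constant depending only on $n$ (a crude Cayley count on $|V|\leq 2n-2$ labels already suffices), yielding the stated inequality with $C_n$ equal to this constant.

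The step I expect to require the most care is the combinatorial reduction in the second paragraph, namely the verification that the compressed Steiner tree indeed witnesses the events $\{v\sim v'\}$ by pairwise disjoint sets of open edges (so that the disjoint-occurrence operator $\bsq$ is legitimately applicable). Once this is pinned down, the remaining algebra is routine and essentially reproduces the spirit of Lemma~\ref{lem_connection_of_two_paths}.
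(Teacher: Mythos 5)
Your proof is correct and takes a genuinely different, non-inductive route from the paper's. The paper proceeds by induction on $n$: to pass from $N$ to $N+1$ it splits on whether $i_{N+1}$ is a leaf or an interior vertex of the spanning tree, further decomposes each case using disjoint occurrence, applies Lemma~\ref{lem_rkb_inequality}, and then invokes the induction hypothesis on the two resulting smaller connectivity events. Your approach replaces this recursion with a one-shot Steiner-tree decomposition: extract the minimal subtree $T^*$ spanning the terminals, suppress degree-$2$ Steiner vertices to get a tree $\tau$ with at most $2n-3$ edges, then apply Lemma~\ref{lem_rkb_inequality} and Lemma~\ref{lem_connection_of_two_paths} directly to all edges of $\tau$ at once, summing over Steiner labels and isomorphism types. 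The disjointness requirement that you flag is indeed the crux, and your construction handles it correctly: distinct edges of $\tau$ correspond to edge-disjoint paths in the tree $T^*$, so the witness sets $J_k$ in the definition of $\bsq$ can be taken to be those paths, which are disjoint subsets of $\N^2_<$. Your route makes the exponent $2n-3$ conceptually transparent (it is the maximal edge count of a Steiner tree on $n$ labeled terminals), whereas the paper's recursion reproduces the same exponent bookkeeping term by term; on the other hand the paper's inductive structure avoids any explicit enumeration of tree isomorphism types, and yields an explicit recursion $C_{n+1}=2n!\sum_{l=1}^{n-1}C_{l+1}C_{n-l+1}+nC_n$ for the constants, which may be preferable if one cares about tracking $C_n$ quantitatively. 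Both proofs rest on exactly the same two inputs (Lemma~\ref{lem_connection_of_two_paths} for the base estimate and Lemma~\ref{lem_rkb_inequality} for disjoint occurrence), so they are close in spirit even though the combinatorial organization differs.
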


\begin{proof}
  Let $i_1,\dots,i_n$ be distinct natural numbers. 
We will consider $\left\{ i_1\sim\dots\sim i_n \right\}$ as an event on the probability space $(\Omega^0,\FF^0,\p^0_{\bx,t})$ (see also \eqref{E:starpage2}). We remark that $\left\{ i_1\sim\dots\sim i_n \right\}$ happens if and only if there exists a minimal spanning tree containing the vertices $i_1,\dots,i_n$.
 More precisely, the event 
 $\left\{ i_1\sim\dots\sim i_n \right\}$
 coincides with the event that
 there exists a connected (random) graph $G_{i_1,\dots,i_n}\subset G_t(\bx;\bA)$ 
 without cycles, such that $\{i_k:k \in [n]\}$ is contained in its vertices,
furthermore the leaves of $G_{i_1,\dots,i_n}$ are WLOG contained in 
 $\{i_k:k \in [n]\}$
 and a deletion of any interior (non-leaf) vertex $j \not\in \{i_k: k \in [n]\}$ together with the corresponding
 incident edges would make $G_{i_1,\dots,i_n}$ a disconnected graph (in this case a forest).
 In the rest of this argument we shall write $j \in G_{i_1,\dots,i_k}$ to mean that $j$ is a vertex of $G_{i_1,\dots,i_n}$. 
 Minimal spanning trees may not be unique, but here we only care about existence.

  We will prove the proposition using mathematical induction. Inequality~\eqref{equ_main_estimate_of_the_probability} for $n=2$ is the statement of Lemma~\ref{lem_connection_of_two_paths}. 
  The induction hypothesis is~\eqref{equ_main_estimate_of_the_probability} for all $n=2,\dots,N$ and the step is to prove the same for $n=N+1$.

Now note that on the event that $G_{i_1,\dots,i_{N+1}}$ exists, it must be that either $i_{N+1}$ is one of its leaves or it is one of its interior vertices.
   Setting $\tilde{\N}:=\N\setminus\{i_{N+1}\}$, we can therefore estimate
  \begin{align}
    \p_{\bx,t}^0\big(i_1&\sim\dots\sim i_{N+1} \big)= \p_{\bx,t}^0\left( \exists G_{i_1,\dots,i_{N+1}} \right)\\
&\leq  \p_{\bx,t}^0\left( \bigcup_{ j \in \tilde{\N}} \left\{ i_{N+1}\sim j \right\}\sq\left\{ \exists G_{i_1,\dots,i_N} \ni j\} \right\}\right) \nonumber \\
&+\p_{\bx,t}^0\left( \bigcup_{\sigma \in \Lambda_N} \bigcup_{l \in [N-1]}\ \ \left\{ \exists G_{i_{\sigma(1)},\dots,i_{\sigma(l)},i_{N+1}}\right\} \sq\left\{ \exists G_{i_{\sigma(l+1)},\dots,i_{\sigma(N)},i_{N+1}} \right\} \right) \nonumber \\
&\leq \sum_{ j \in \tilde{\N}} \p_{\bx,t}^0\left( \left\{ i_{N+1}\sim j \right\}\sq\left\{ \exists G_{i_1,\dots,i_N} \ni j\} \right\}\right) \nonumber \\
&+\sum_{ \sigma \in \Lambda_N } \sum_{ l=1 }^{ N-1 } \p_{\bx,t}^0\left( \left\{ \exists G_{i_{\sigma(1)},\dots,i_{\sigma(l)},i_{N+1}}\right\} \sq\left\{ \exists G_{i_{\sigma(l+1)},\dots,i_{\sigma(N)},i_{N+1}} \right\} \right). \label{E:starpage4}
  \end{align}
  \begin{figure}[H]
      \centering
      \includegraphics[width=70mm]{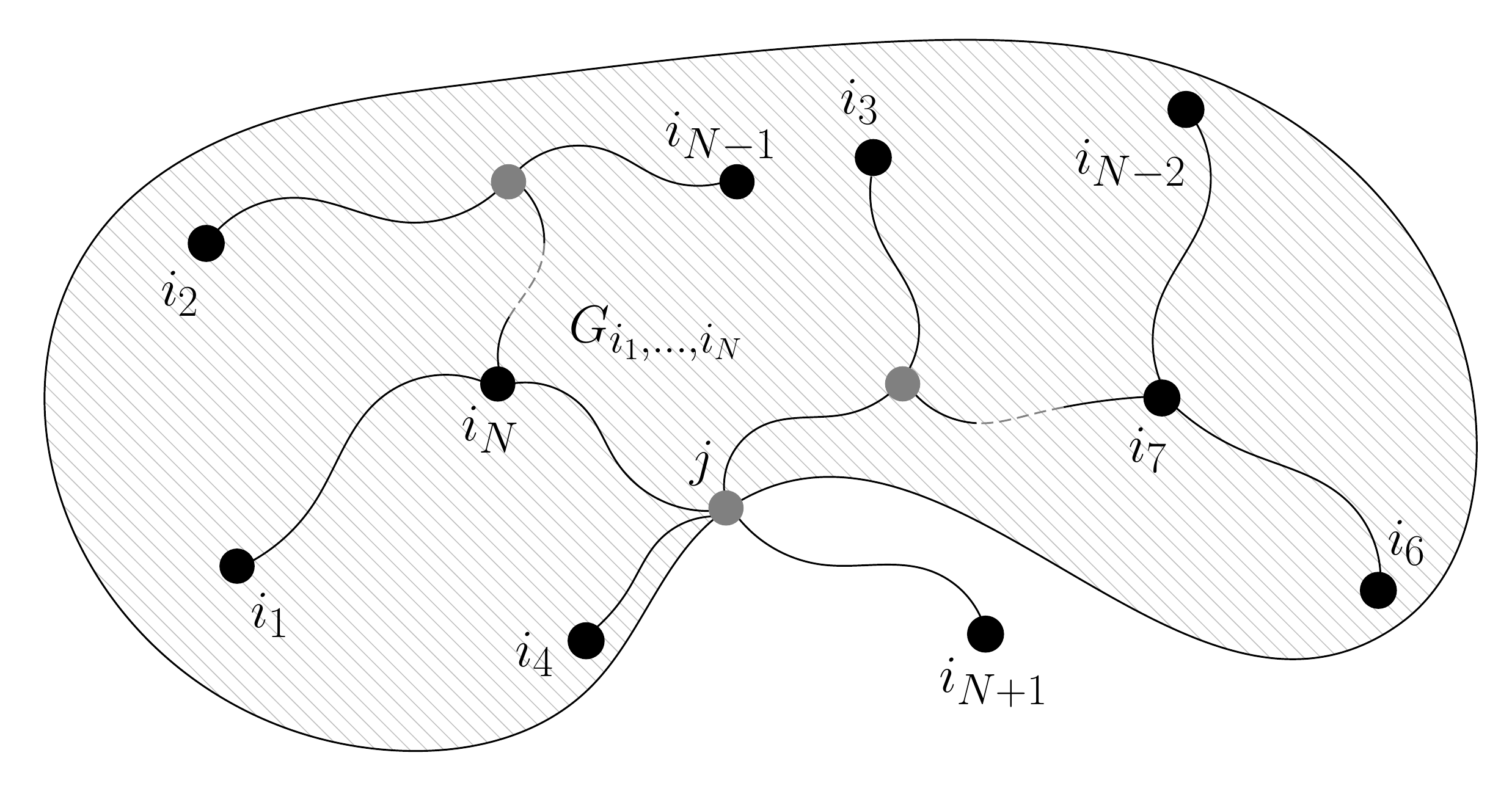}
          \includegraphics[width=70mm]{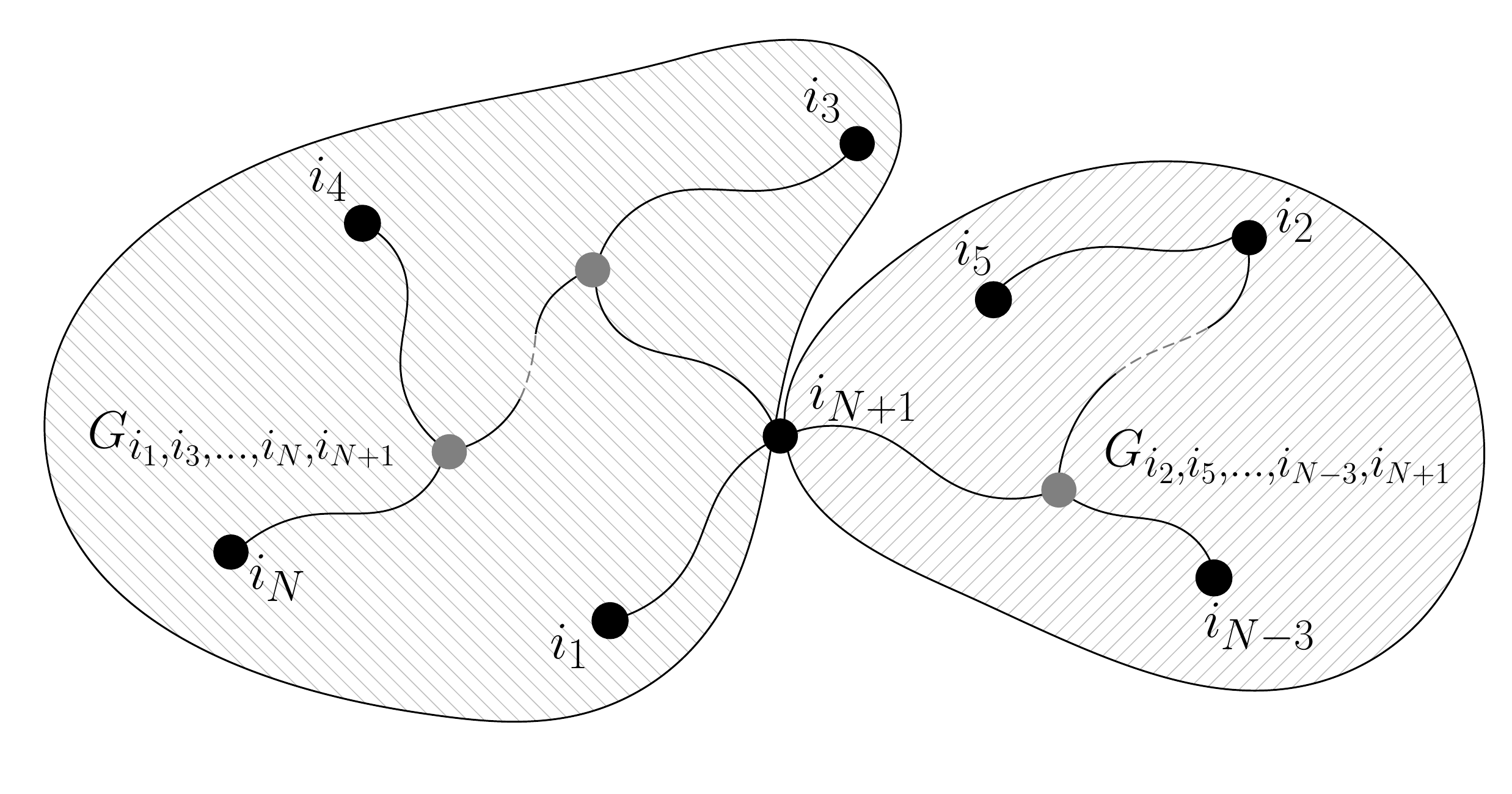}
      \caption{This illustrates the cases $i_{N+1}$ is a leaf (on the left), and $i_{N+1}$ is an interior vertex (on the right).}
  \end{figure}
  We next estimate each term on the right hand side of \eqref{E:starpage4}, starting with the terms at the end, and then moving onto the terms in the second to last line.
 Due to Lemma~\ref{lem_rkb_inequality} and the induction hypothesis, one has 
  \begin{align}
  \nonumber
    \p_{\bx,t}^0\big( \big\{ \exists &G_{i_{\sigma(1)},\dots,i_{\sigma(l)},i_{N+1}}\big\} \sq\left\{ \exists G_{i_{\sigma(l+1)},\dots,i_{\sigma(N)},i_{N+1}} \right\} \big)\\
  \nonumber
    &\leq \p_{\bx,t}^0\left( \exists G_{i_{\sigma(1)},\dots,i_{\sigma(l)},i_{N+1}} \right)\p_{\bx,t}^0\left( \exists G_{i_{\sigma(l+1)},\dots,i_{\sigma(N)},i_{N+1}} \right)=\\
\nonumber
    &=\p_{\bx,t}^0\left( i_{\sigma(1)}\sim\dots\sim i_{\sigma(l)}\sim i_{N+1} \right)\p_{\bx,t}^0\left( i_{\sigma(l+1)}\sim\dots\sim i_{\sigma(N)}\sim i_{N+1} \right)\\
\nonumber
    &\leq C_{l+1}\frac{ x_{i_{\sigma(1)}}\dots x_{i_{\sigma(l)}}x_{i_{N+1}}t^{\frac{ l+1 }{ 2 }} }{ \left(1-t \|\bx\|^2\right)^{2(l+1)-3} }\cdot C_{N-l+1}\frac{ x_{i_{\sigma(l+1)}}\dots x_{i_{\sigma(N)}}x_{i_{N+1}}t^{\frac{ N-l+1 }{ 2 }} }{ \left(1-t \|\bx\|^2\right)^{2(N-l+1)-3} }\\
\nonumber
    &= C_{l+1}C_{N-l+1}\frac{ x_{i_1}\dots x_{i_{N+1}}t^{\frac{ N+1 }{ 2 }}}{ (1-t \|\bx\|^2)^{2N-2}}\cdot x_{i_{N+1}}t^{ \frac{1}{ 2 }}\\
\label{E:inducstep1}    
    &\leq C_{l+1}C_{N-l+1}\frac{ x_{i_1}\dots x_{i_{N+1}}t^{\frac{ N+1 }{ 2 }}}{ (1-t \|\bx\|^2)^{2N-1}},
  \end{align}

\vspace{-0.3cm}
\noindent
  where in the final step we used the facts that $t^{ \frac{1}{ 2 }}x_{i_{N+1}}\leq t^{ \frac{1}{ 2 }}\|\bx\|<1$ and $1-t \| \bx\|^2\leq 1$. 
  
Now let us denote $I\equiv I_N:=\{i_1,\dots,i_N\}$ and let $ I^c=\tilde{\N}\setminus I$.
Let us first assume that $j \in I$.
Then, similarly to the just made computation, we have
  \begin{align}
  \nonumber
    \p_{\bx,t}^0\big( \left\{ i_{N+1}\sim j \right\}&\sq\left\{ \exists G_{i_1,\dots,i_N} \ni j \right\}\big)= \p_{\bx,t}^0\left( \left\{ i_{N+1}\sim j \right\}\sq\left\{ \exists G_{i_1,\dots,i_N} \right\}\right)\\
\nonumber
  &\leq \p_{\bx,t}^0\left( i_{N+1}\sim j \right)\p_{\bx,t}^0\left(\exists G_{i_1,\dots,i_N}\right)\\
\nonumber
  &= \p_{\bx,t}^0\left( i_{N+1}\sim j \right)\p_{\bx,t}^0\left(i_1\sim\dots\sim i_N\right)\\
\label{E:inducstep2}    
  &\leq \frac{ x_{i_{N+1}}x_{j}t }{ 1-t \|\bx\|^2 }\cdot C_N \frac{ x_{i_1}\dots x_{i_N}t^{ \frac{N}{ 2 }} }{ \left(1-t \|\bx\|^2\right)^{2N-3} }\leq C_N \frac{ x_{i_1}\dots x_{i_{N+1}}t^{\frac{ N+1 }{ 2 }} }{ \left(1-t \|\bx\|^2\right)^{2N-1} },
  \end{align}
  where we used again the estimates $x_jt^{1/2}<1$ and $1-t \| \bx\|^2\leq 1$.
  Next, let us assume that $j \in I^c$. Then $j$ is necessarily an interior vertex of the minimal spanning tree $G_{i_1,\dots i_N}$.
  In particular, $G_{i_1,\dots i_N}$ is a union of two 
  minimal spanning trees $\G_{i_{\sigma(1)},\dots,i_{\sigma(l)},j}$ and $G_{i_{\sigma(l+1)},\dots,i_{\sigma(N)},j}$, for some $\sigma \in \Lambda_{N}$ and $l \in [N-1]$,
  which have %(pair-wise)
 disjoint edge sets, and their only vertex in common is $j$. 
  Therefore, using the induction hypothesis and Lemma~\ref{lem_rkb_inequality}, we can estimate
  \begin{align*}
    \sum_{ j \in I^c }\p_{\bx,t}^0&\left( \left\{ i_{N+1}\sim j \right\}\sq\left\{ \exists G_{i_1,\dots,i_N} \ni j \right\}\right)\leq  \sum_{ j \in I^c }\p_{\bx,t}^0\left( i_{N+1}\sim j \right)\p_{\bx,t}^0\left(\exists G_{i_1,\dots,i_N} \ni j\right)\\
&\leq \sum_{ j \in I^c }\sum_{ \sigma \in \Lambda_N } \sum_{ l=1 }^{ N-1 } \p_{\bx,t}^0\left( i_{N+1}\sim j \right)\p_{\bx,t}^0\left(\left\{\exists G_{i_{\sigma(1)},\dots,i_{\sigma(l)},j}\right\}\sq\left\{\exists G_{i_{\sigma(l+1)},\dots,i_{\sigma(N)},j}\right\}\right)\\
&\leq  \sum_{ j \in I^c }\sum_{ \sigma \in \Lambda_N } \sum_{ l=1 }^{ N-1 } \p_{\bx,t}^0\left( i_{N+1}\sim j \right)\p_{\bx,t}^0\left(\exists G_{i_{\sigma(1)},\dots,i_{\sigma(l)},j}\right)\p_{\bx,t}^0\left(\exists G_{i_{\sigma(l+1)},\dots,i_{\sigma(N)},j}\right).
  \end{align*}
  \begin{figure}[H]
      \centering
      \includegraphics[width=70mm]{p_01.pdf}
          \includegraphics[width=70mm]{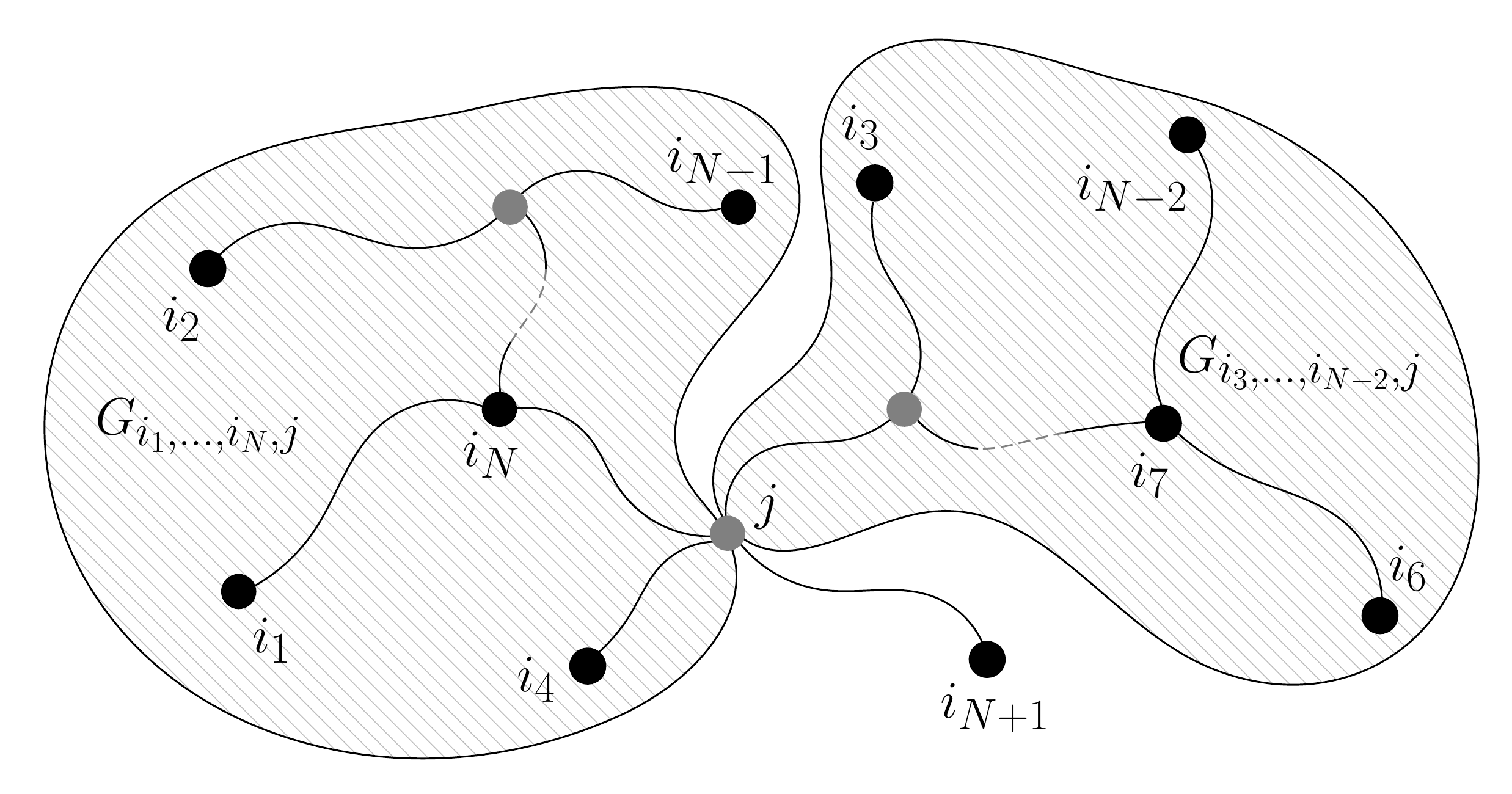}
      \caption{This figure illustrates the cases $j\in I$ (to the left) and  $j\in I^c$ (to the right).}
  \end{figure}

We can now use the induction hypothesis to bound the right hand side of the last expression by
\begin{align}
\nonumber
\sum_{ j \in I^c }\sum_{ \sigma \in \Lambda_N }& \sum_{ l=1 }^{ N-1 } \frac{ x_{i_{N+1}}x_jt }{ 1-t \|\bx\|^2 }\cdot C_{l+1}\frac{ x_{i_{\sigma(1)}}\dots x_{i_{\sigma(l)}}x_j t^{\frac{ l+1 }{ 2 }}}{ (1-t \|\bx\|^2)^{2(l+1)-3} }\cdot C_{N-l+1}\frac{ x_{i_{\sigma(l+1)}}\dots x_{i_N}x_j t^{\frac{ N-l+1 }{ 2 }} }{ (1-t \|\bx\|^2)^{2(N-l+1)-3} }\\
\nonumber
&=\sum_{ \sigma \in \Lambda_N } \sum_{ l=1 }^{ n-1 } C_{l+1}C_{N-l+1}\frac{ x_{i_1}\dots x_{i_{N+1}} t^{\frac{ N+4 }{ 2 }}}{ (1-t \|\bx\|^2)^{2N-1} }\left(\sum_{ j \in I^c }x_j^{3}\right)\\
\nonumber
&\leq N! \sum_{ l=1 }^{ N-1 } C_{l+1}C_{N-l+1}\frac{ x_{i_1}\dots x_{i_{N+1}} t^{\frac{ N+1 }{ 2 }}}{ (1-t \|\bx\|^2)^{2N-1} }\cdot\|\bx\|^3t^{ \frac{3}{ 2 }}\\
&\leq N! \sum_{ l=1 }^{ N-1 } C_{l+1}C_{N-l+1}\frac{ x_{i_1}\dots x_{i_{N+1}} t^{\frac{ N+1 }{ 2 }}}{ (1-t \|\bx\|^2)^{2N-1} }.
\label{E:inducstep3}
  \end{align}
In the above computation we again used the estimate $\|\bx\|t^{1/2}<1$.
Combining estimate \eqref{E:starpage4} with (\ref{E:inducstep1}--\ref{E:inducstep3}) completes the induction step with $C_{n+1} = 2n! \sum_{ l=1 }^{ n-1 } C_{l+1}C_{n-l+1} + n C_n$, and therefore the whole argument.
\end{proof}

\section{\texorpdfstring{$\|\bX(t)\|$}{X(t)} has all moments at all times}%
\label{sec:finiteness_of_first_moment}

Recall that $\bX$ is a multiplicative coalescent starting from $\bx \in \cvd$, and that $G_t(\bx;\bA)$ is a graphical representation of $\bX(t)$, as described in Section~\ref{sub:graph_construction}.
The main goal of this section is to prove Theorem~\ref{the_finiteness_of_fourth_moment}. We first show that the $n$-th moment of $\|\bX(t)\|$ is finite for small $t$, and then we extend this result to all $t\geq 0$.
\subsection{Argument for small times}\label{sub:argument_for_small_times}
We still assume that $\bX$ is started at time $0$ from initial configuration $\bx$.
\begin{lemma} %finitenes of fourth norm
  \label{lem_finitenes_of_expected_S_n}
  For each $n \geq 2$ there exists a constant $D_n>0$ such that for each $\bx \in l^2$ and $t \in (0,1/\|\bx\|^2)$ we have
  \[
    \E\sum_{ k=1 }^{ \infty } X_k^n(t)< \frac{ D_n\|\bx\|^n }{ \left(1-t \|\bx\|^2\right)^{2n-3} }\ .
  \]
\end{lemma}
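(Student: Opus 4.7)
The starting point is the graphical construction: at time $t$, the mass of each connected component $\mathcal{C}$ of $G_t(\bx;\bA)$ equals $\sum_{i\in\mathcal{C}} x_i$, so, summing over components,
\[
\sum_{k\ge 1} X_k^n(t) \;=\; \sum_{(i_1,\dots,i_n)\in \N^n} x_{i_1}\cdots x_{i_n}\,\I\{i_1\sim\dots\sim i_n\},
\]
where the events $\{i_1\sim\dots\sim i_n\}$ are measurable w.r.t.\ $\FF^0$ (cf.\ \eqref{E:starpage2}). Taking expectation and applying Fubini,
\[
\E\sum_{k} X_k^n(t) \;=\; \sum_{(i_1,\dots,i_n)\in \N^n} x_{i_1}\cdots x_{i_n}\,\p(i_1\sim\dots\sim i_n).
\]

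I would then partition this sum according to the number $m\in [n]$ of distinct values in $(i_1,\dots,i_n)$ and the ordered set partition of $[n]$ induced by the equalities of coordinates. For a fixed ordered partition of $[n]$ into blocks of sizes $(k_1,\dots,k_m)$ with $k_l\ge 1$ and $\sum k_l=n$, the contribution is
\[
\sum_{\substack{j_1,\dots,j_m \in \N \\ \text{pairwise distinct}}} x_{j_1}^{k_1}\cdots x_{j_m}^{k_m}\,\p(j_1\sim\dots\sim j_m).
\]
For $m=1$ the probability equals $1$ and $\sum_j x_j^n\le \|\bx\|^n$, using that $x_j\le\|\bx\|$ yields $\sum_j x_j^p\le \|\bx\|^p$ for every $p\ge 2$. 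For $2\le m\le n$, Proposition~\ref{pro_estimate_for_connected_componnents} applied to $j_1,\dots,j_m$ gives $\p(j_1\sim\dots\sim j_m)\le C_m\, x_{j_1}\cdots x_{j_m}\, t^{m/2}/(1-t\|\bx\|^2)^{2m-3}$, and factoring the sum as $\prod_{l}\sum_j x_j^{k_l+1}\le \|\bx\|^{n+m}$ (using the same inequality, since $k_l+1\ge 2$) bounds the displayed contribution by
\[
C_m\,\frac{t^{m/2}\|\bx\|^{n+m}}{(1-t\|\bx\|^2)^{2m-3}} \;=\; C_m\,\|\bx\|^n\,\frac{(t^{1/2}\|\bx\|)^m}{(1-t\|\bx\|^2)^{2m-3}} \;\le\; C_m\,\frac{\|\bx\|^n}{(1-t\|\bx\|^2)^{2n-3}},
\]
where the last step uses $t^{1/2}\|\bx\|<1$ together with $(1-t\|\bx\|^2)\le 1$ to inflate the exponent in the denominator from $2m-3$ to $2n-3$. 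Summing over the finitely many ordered partitions (their number depends only on $n$) and over $m\in[n]$ yields the claim with $D_n$ depending only on $n$ and $C_2,\dots,C_n$.

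The main subtlety is twofold. First, the decomposition into ordered partitions must be carried out carefully so each $n$-tuple is accounted for exactly once and one can freely use the estimate $\sum_j x_j^p\le\|\bx\|^p$ on each coordinate. Second, the case $m=1$ has to be treated separately, since then the exponent $2m-3=-1$ in Proposition~\ref{pro_estimate_for_connected_componnents} makes that bound inapplicable; fortunately the trivial probability bound $\p(i\sim i)=1$ combined with the $l^p$--$l^2$ inequality suffices. Everything else amounts to routine bookkeeping.
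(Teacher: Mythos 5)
Your argument is correct and follows essentially the same route as the paper's proof: expand $\sum_k X_k^n(t)$ as the $n$-fold sum $\sum_{i_1,\dots,i_n} x_{i_1}\cdots x_{i_n}\I_{\{i_1\sim\dots\sim i_n\}}$, take expectations via Fubini--Tonelli, decompose according to the partition of $[n]$ induced by coincidences among coordinates, apply Proposition~\ref{pro_estimate_for_connected_componnents} to the resulting probabilities over distinct indices, and close with $\sum_j x_j^p\le\|\bx\|^p$ (for $p\ge 2$) together with $t\|\bx\|^2<1$ and $0<1-t\|\bx\|^2\le 1$. One small remark: your explicit separate treatment of the degenerate case $m=1$ (where the partition is $\{[n]\}$ and $\p(i\sim i)=1$) is actually more careful than the paper's write-up, which as stated applies the Proposition~\ref{pro_estimate_for_connected_componnents} bound formally to $p=1$ even though that case lies outside the scope of the proposition (its inductive proof starts at $n=2$, and a bound of the form $1\le C_1 x_{i_1}t^{1/2}(1-t\|\bx\|^2)$ cannot hold uniformly). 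The $m=1$ term is indeed trivially bounded by $\|\bx\|^n\le\|\bx\|^n/(1-t\|\bx\|^2)^{2n-3}$, so the lemma is unaffected, but you were right to flag it.
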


\begin{proof} %
 First note that all $X_k(t)$ are non-negative random variables, so that due to the monotone convergence theorem the expectation and the summation can be exchanged. We will apply the Fubini-Tonelli theorem after making the following observations.
  
  At time $t$, the largest component (with mass $X_1(t)$) is formed from individual (original) blocks with indices in a random set denoted by $I_1 \subset \N$, the second largest component (with mass $X_2(t)$) is formed from original blocks with indices in $I_2 \subset \N$, and similarly the $k$th largest component (with mass $X_k(t)$) is formed from individual (original) blocks with indices $I_k \subset \N$.
  We know that $\N$ equals the disjoint union of $I_k$, $k\in \N$.
  Next observe that 
  $$
  X_k^n(t) = \left(\sum_{j \in I_k} x_j\right)^n = \sum_{j_1\in I_k} \sum_{j_2\in I_k} \ldots \sum_{j_n\in I_k}
  x_{j_1}x_{j_2}\cdots x_{j_n},
  $$
  so that
  \begin{equation}
  \sum_{ k=1 }^{ \infty } X_k^n(t) = \sum_{i_1=1}^\infty \sum_{i_2=1}^\infty \ldots \sum_{i_n=1}^\infty x_{i_1}x_{i_2}\cdots x_{i_n} \I_{\{i_1 \sim i_2 \sim \ldots \sim i_n\}}.
  \label{E:sumXkton}    
  \end{equation}
  
  Out of convenience we apply here a natural convention that $i\sim i$ for each $i\in \N$, as indicated in Section~\ref{sub:graph_construction}. 
  
  The (finite) family of all partitions $\pi=\{\pi_1,\dots,\pi_p\}$ of $[n]$ will be denoted by $\Pi_n$. 
  If $\pi\in \Pi_n$ we will write $p$ or $|\pi|$ for a number of distinct sets
  (or equivalently, the number of equivalence classes) in $\pi$. 
  Similarly, if $e$ is an equivalence class of $\pi$ then $|e|$  denotes the number of distinct elements in $e$.
  Each equivalence class $e$ is a subset of $[n]$ and therefore it has its minimal element $\min(e)$.
  It is convenient to increasingly order the equivalence classes in $\pi$ with respect to their minimal elements.
  Let $o(\pi):[n]\to[|\pi|]$ be the map which assigns to each $i$ the rank of its equivalence class with respect to the just defined ordering.
  In particular, $o(\pi)(1)$ is always equal to $1$, $o(\pi)(k)=2$ for minimal $k=k_1$ such that $k \not\sim_\pi 1$, $o(\pi)(k)=3$ for minimal $k=k_2$ such that $k \not\sim_\pi 1$ and  $k \not\sim_\pi k_1$, and so on.  
  Note that $\pi$ can be completely recovered from $o(\pi)$.
  
  Each $n$-tuple $(i_1,\ldots,i_n)$, where coordinates are in $\N$
  is equivalent to a function from $[n]$ to $\N$, and each such function $i$
  can be bijectively mapped into a labelled partition of $[n]$, where
  $a$ is related to $b$ iff $i_a=i_b$, and the label of each equivalence class is
  precisely the value (natural number) which $i_\cdot$ takes on any of its elements.
  %Let $\Pi_n^*$ denote this family of all labelled partitions, and 
  Let $\pi[(i_1,\ldots,i_n)]$ be the labelled partition which uniquely corresponds to $(i_1,\ldots,i_n)$.
The reader should note that this newly defined partition structure is completely different from (unrelated to) the random connectivity relation induced by the random graph. 
  Here and elsewhere in the paper we reserve the symbol $\sim$ to denote the latter relation.

  With this correspondence in mind, note that the $n$-fold summation 
  $\sum_{ i_1,\dots,i_n=1 }^{ \infty } g(i_1, \ldots, i_n)$
  can be rewritten as 
  $\sum_{p=1}^n \sum_{ \pi\in \Pi_n: |\pi|=p } \sum_{i_1,i_2,\ldots,i_p=1 \text{ distinct}}^\infty g(i_{o(\pi)(1)},\ldots,i_{o(\pi)(n)})$.
In particular, for any fixed $\pi \in \Pi_n$ and any $n$-tuple $(i_1,\ldots, i_n)$ such that $\pi=\pi[(i_1,\ldots,i_n)]$ %we have that 
  $$\p\left( i_1 \sim \dots \sim i_n \right) =
  \p\left( i_{o(\pi)(1)} \sim \dots \sim i_{o(\pi)(n)} \right) = 
  \p\left( i_1 \sim i_{k_1} \dots \sim l_{k_p} \right)\!,$$
  where $k_j$ is precisely the minimal element of the $j$th equivalence class in $\pi$.

  Using \eqref{E:sumXkton}, the just given reasoning, and Proposition~\ref{pro_estimate_for_connected_componnents}  
  %and the fact $t \|\bx\|^2<1$, 
  we can now estimate 
  \begin{align*}
   \E \sum_{ k=1 }^{ \infty }  X_k^n(t)&= \sum_{ i_1,\dots,i_n=1 }^{ \infty } x_{i_1}\dots x_{i_n}\p\left( i_1 \sim \dots \sim i_n \right)\\
    &= \sum_{p=1}^n \sum_{ \pi \in \Pi_n: |\pi|=p } \ \sum_{ i_1,\dots, i_p \text{ distinct}} x_{i_{o(\pi)(1)}}\cdots x_{i_{o(\pi)(n)}} \p\left( i_{o(\pi)(1)}  \sim\dots\sim i_{o(\pi)(n)} \right)\\
    &= \sum_{p=1}^n \sum_{ \pi \in \Pi_n: |\pi|=p } \ \sum_{ i_1,\dots, i_p \text{ distinct}} x_{i_1}^{|e_1|}\cdots x_{i_p}^{|e_p|}\p\left( i_1 \sim\dots\sim i_p \right)\\
    &\leq \sum_{p=1}^n \sum_{ \pi \in \Pi_n: |\pi|=p } \,
    \sum_{ i_1,\dots, i_p \text{ distinct}} x_{i_1}^{|e_1|}\cdots x_{i_p}^{|e_p|}\frac{ C_px_{i_1}\dots x_{i_p}t^{p/2} }{ \left(1-t \|\bx\|^2\right)^{2p-3} },
  \end{align*}
 where $e_1, \ldots, e_p$ are the equivalence classes of $\pi$, ordered as explained above.
If we replace the interior sum (over distinct $p$-tuples) by the sum over all $p$-tuples, and again recall that $0\leq t\|x\|^2< 1$ and that 
$\sum y_k^{|e|+1}\leq \|y\|^{|e|+1}$, we get a further upper bound
  \begin{align*}
     \sum_{p=1}^n \sum_{ \pi \in \Pi_n:|\pi|=p }& \frac{C_pt^{p/2} }{ \left(1-t \|\bx\|^2\right)^{2p-3} }\sum_{ i_1, \dots, i_p=1 }^{\infty} x_{i_1}^{|e_1|+1}\cdots x_{i_p}^{|e_p|+1}\\
    &\leq \sum_{p=1}^n \sum_{ \pi \in \Pi_n: |\pi|=p } \frac{C_pt^{p/2} }{ \left(1-t \|\bx\|^2\right)^{2p-3} } \|\bx\|^{|e_1|+1}\cdots \|\bx\|^{|e_p|+1}\\
    &\leq \sum_{p=1}^n \sum_{ \pi \in \Pi_n: |\pi|=p } \frac{C_pt^{p/2}\|\bx\|^{n+p} }{ \left(1-t \|\bx\|^2\right)^{2p-3} }\leq \sum_{p=1}^n  \sum_{ \pi \in \Pi_n: |\pi|=p } \frac{C_p\|\bx\|^n }{ \left(1-t \|\bx\|^2\right)^{2n-3} }\\
    &= \frac{\|\bx\|^n }{ \left(1-t \|\bx\|^2\right)^{2n-3}} 
    \sum_{\pi \in \Pi_n} C_{|\pi|},
  \end{align*}
 and this is the stated claim with $D_n:= \sum_{\pi \in \Pi_n} C_{|\pi|}$.
\end{proof}

As already mentioned, the \MC\ $(\bX(t),\, t\geq 0)$ is a Markov process taking values in $\cvd$. Applying its generator $\Gamma$ to $g(\bX(t))$, where $g$ is an arbitrary function from $\cvd$ to $\R$, one can conclude that the process 
\begin{equation} %martingale
  \label{equ_martingale}
  M_g(t):=g(\bX(t))-\int_{ 0 }^{ t }  \Gamma g(\bX(r)) dr, \quad t\geq 0, 
\end{equation}
is a local $(\FF_t)$-martingale (see also identity (66) in~\cite{Aldous:1998}). Here $\FF_t=\sigma(\bX(r),\ r\leq t)$, $t\geq 0$, and the generator $\Gamma$ of $\bX(t)$, $t\geq 0$, is defined as
\[
  \Gamma g(\bx)=\sum_{ i=1 }^{ \infty } \sum_{ j=i+1 }^{ \infty } x_ix_j(g(\bx^{i,j})-g(\bx)),
\]
where $\bx^{i,j}$ is the configuration obtained from $\bx$ by merging the $i$-th and $j$-th clusters, or equivalently (assuming that $i<j-1$, the other cases can be written similarly) $\bx^{i,j}=(x_1,\dots,x_{l-1},x_i+x_j,x_l,\dots,x_{i-1},x_{i+1},\dots,x_{j-1},x_{j+1},\dots)$ for some $l$ such that $x_{l-1} \leq x_i+x_j\leq x_l$.  We will use $\Gamma$ and \eqref{equ_martingale} in order to show the finiteness of the $n$-th moment of the multiplicative coalescent at small times. 

We first prove an auxiliary statement, which does not require $\bX$ to have \MC\ law (it is sufficient for the process to be \cdl\!\!), probably known in the literature, but we were unable to find a precise reference. 
  Let $g,f_1,f_2:\cvd \to [0,\infty)$ be measurable functions such that $g$ is continuous and suppose that
  \[
    M(t)=g(\bX(t))-\int_{ 0 }^{ t } \left( f_2(\bX(r))-f_1(\bX(r)) \right)dr, \quad t\geq 0, 
  \]
 is a local $(\FF_t)$-martingale. 
Define 
  \[
    \tau_n:=\inf\left\{ t:\ 
      \begin{array}{l} %
	\max\big\{|M(t)|,|M(t-)|,g(\bX(t)),g(\bX(t-))\big\}\geq n\\ 
         \mbox{or}\ \ \int_{ 0 }^{ t } f_1(\bX(r))dr\geq n
      \end{array}  \right\}, \quad n\geq 1.
  \]
Continuity hypothesis on $g$ assures that $\tau_n$ is an $(\FF_t)$-stopping time.
More precisely, since $(M(t),\, t\geq 0)$ and $(g(\bX(t)),\, t\geq 0)$ are right continuous processes with left limits, $\tau_n$, $n\geq 1$ are $(\FF_t)$-stopping times, by Proposition~2.1.5~(a)~\cite{Ethier:1986}. 
\begin{lemma} %recurrent property
  \label{lem_recurrent_property}
If $\E \sup\limits_{ r \in [0,t] }g(\bX(r))<\infty$ and $\E \int_{ 0 }^{ t } f_1(\bX(r))dr<\infty $ for some $t>0$, then also $\E \int_{ 0 }^{ t } f_2(\bX(r))dr<\infty $.
\end{lemma}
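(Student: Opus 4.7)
My plan is to apply the optional stopping theorem to the local martingale $M$ at the bounded stopping time $t\wedge\tau_n$, rearrange the resulting identity so that $\E\int_0^{t\wedge\tau_n}f_2(\bX(r))\,dr$ is expressed in terms of quantities controlled by the two hypotheses, and then let $n\to\infty$ using monotone convergence (available since $f_2\ge 0$).

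First I would check that $\tau_n\uparrow\infty$ almost surely: $g(\bX(\cdot))$ is \cdl and has finite supremum on $[0,t]$ by hypothesis, $M$ is \cdl and hence locally bounded a.s., and $\int_0^{\cdot}f_1(\bX(r))\,dr$ is continuous and finite a.s.\ (its expectation at time $t$ is finite by hypothesis and $f_1\ge 0$); so all quantities appearing in the definition of $\tau_n$ stay below $n$ on $[0,t]$ for every sufficiently large $n$. Next I would exhibit an integrable dominator for $|M(\cdot\wedge\tau_n)|$ on $[0,t]$: the definition directly gives $|M(s)|<n$ for $s<\tau_n$, and passing to the left limit yields $|M(\tau_n-)|\le n$ and $g(\bX(\tau_n-))\le n$; since $M-g(\bX(\cdot))$ is continuous, every jump of $M$ is a jump of $g(\bX(\cdot))$, hence
$$|\Delta M(\tau_n)|=|\Delta g(\bX(\tau_n))|\le g(\bX(\tau_n))+g(\bX(\tau_n-))\le\sup_{r\in[0,t]}g(\bX(r))+n,$$
which gives $|M(s\wedge\tau_n)|\le 2n+\sup_{r\in[0,t]}g(\bX(r))$ for all $s\le t$, an $L^1$ bound by hypothesis.

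Given any localizing sequence $\rho_k\uparrow\infty$ for $M$, the stopped process $M(\cdot\wedge\rho_k)$ is a true martingale, so optional sampling at the bounded stopping time $t\wedge\tau_n$ yields $\E M(t\wedge\tau_n\wedge\rho_k)=\E M(0)=\E g(\bX(0))$. Dominated convergence (with the $k$-uniform bound just established) lets us pass $k\to\infty$ and gives $\E M(t\wedge\tau_n)=\E g(\bX(0))$; rearranging,
$$\E\int_0^{t\wedge\tau_n}f_2(\bX(r))\,dr=\E g(\bX(t\wedge\tau_n))-\E g(\bX(0))+\E\int_0^{t\wedge\tau_n}f_1(\bX(r))\,dr\le\E\sup_{r\in[0,t]}g(\bX(r))+\E\int_0^{t}f_1(\bX(r))\,dr,$$
a bound independent of $n$. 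Monotone convergence as $n\to\infty$ (using $f_2\ge 0$ and $t\wedge\tau_n\uparrow t$ a.s.) then delivers the conclusion.

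The delicate point is that $M$ is only a local martingale and may have an arbitrarily large jump at $\tau_n$, so $|M(\tau_n)|$ is not automatically controlled by $n$; this is precisely why the definition of $\tau_n$ includes the left-limit clauses $|M(t-)|\ge n$ and $g(\bX(t-))\ge n$, and why the continuity of $\int_0^{\cdot}f_1$ matters --- together they force any jump of $M$ at $\tau_n$ to equal a jump of $g(\bX(\cdot))$, which can then be absorbed into $\sup_{r\in[0,t]}g(\bX(r))$, the one integrable control on $g(\bX(\cdot))$ that we possess.
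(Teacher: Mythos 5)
Your proof is correct and follows essentially the same strategy as the paper's: stop at $\tau_n$, show $\E M(t\wedge\tau_n)=g(\bx)$, rearrange, then pass to the limit via monotone convergence/Fatou. Where you improve on the paper is in justifying that $\E M(t\wedge\tau_n)=g(\bx)$: the paper simply asserts that ``$M(t\wedge\tau_n)$ is bounded,'' which is not literally true since $M$ can jump at $\tau_n$ by an amount no stopping-time clause controls. You correctly observe that $M-g(\bX(\cdot))$ is continuous (the integral term is absolutely continuous), so $\Delta M(\tau_n)=\Delta g(\bX(\tau_n))$ can be absorbed into $\sup_{r\in[0,t]}g(\bX(r))$, yielding the $L^1$ dominator $2n+\sup_{r\in[0,t]}g(\bX(r))$; this, combined with a localizing sequence and dominated convergence, rigorously upgrades the stopped local martingale to a true martingale. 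This is a genuine tightening of the argument that the paper glosses over, and also makes clear why the hypothesis $\E\sup_{r\in[0,t]}g(\bX(r))<\infty$ (rather than merely $\E g(\bX(t))<\infty$) is needed.
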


\begin{proof} %
  From the assumptions we can conclude that $\tau_n \nearrow \infty$ a.s.~as $n \to \infty$. 
Note that $M(t\wedge\tau_n)$, $t\geq 0$, is bounded, and therefore, it is an $(\FF_t)$-martingale for every $n\geq 1$. Thus for any given $n\geq 1$
  \[
    \E M(t\wedge \tau_n)=\E g(\bX(t\wedge\tau_n))-\E\int_{ 0 }^{ t\wedge\tau_n }\left( f_2(\bX(r)) - f_1(\bX(r))\right)dr = g(\bx).
  \]
By monotone convergence and Fatou's lemma, one can now estimate
  \begin{align*}
    \E \int_{ 0 }^{ t } f_2(\bX(r))dr\leq & \varliminf_{ n\to\infty }\E\int_{ 0 }^{ t\wedge \tau_n }f_2(\bX(r))dr= \varliminf_{ n\to\infty }\E g(\bX(t\wedge \tau_n))\\
    +&\varliminf_{ n\to\infty }\E \int_{ 0 }^{ t\wedge \tau_n } f_1(\bX(r))dr-g(\bx)\\
    &\leq \E \sup\limits_{ r \in [0,t] }g(\bX(r))+\int_{ 0 }^{ t } f_1(\bX(r))dr-g(\bx)<\infty,	
  \end{align*}
as stated.
\end{proof}

From now on we again assume that $\bX$ is a \MC\ started from $\bx$.
Define functions $s_n:\cvd \to [0,\infty)$ as
$s_n(\bx):=\sum_{ k=1 }^{ \infty } x_k^n$, for each $n\in \N$.
It is easy to see that  $s_n$ is continuous for each $n\geq 2$.
Led by previous \MC\ literature, we denote 
  \[
    S_n(t):=s_n\left(\bX(t)\right)\equiv \sum_{ k=1 }^{ \infty } X_k^n(t), \quad t\geq 0,
  \]
  and $S(t):=S_2(t)=\|\bX(t)\|^2$, $t\geq 0$. 
 \begin{proposition} %finiteness of fourth moment
  \label{pro_finiteness_of_fourth_moment}
  For every $n\geq 1$, $\bx \in l^2$ and $t \in [0,1/\|\bx\|^2)$
  \begin{equation} %expectation of integral of X
  \label{equ_expectation_of_integral_of_x} 
  \E \|\bX(t)\|^n< +\infty.
  \end{equation}
\end{proposition}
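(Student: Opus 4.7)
By Cauchy--Schwarz, $\E\|\bX(t)\|^{2k-1}\le\bigl(\E\|\bX(t)\|^{2k-2}\bigr)^{1/2}\bigl(\E\|\bX(t)\|^{2k}\bigr)^{1/2}$, so it suffices to treat the even exponents, i.e.\ to show $\E S(t)^k<\infty$ for every $k\in\N$ and every $t\in[0,1/\|\bx\|^2)$. I argue by induction on $k$; the case $k=1$ is Lemma~\ref{lem_finitenes_of_expected_S_n} with $n=2$.

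For the inductive step, suppose $\E S(r)^{k-1}<\infty$ for all $r\in[0,1/\|\bx\|^2)$. The plan is to apply Lemma~\ref{lem_recurrent_property} to $g(\bx):=s_2(\bx)^{k-1}$. A direct binomial expansion of $(s_2+2x_ix_j)^{k-1}-s_2^{k-1}$ gives
\[
  \Gamma s_2^{k-1}(\bx)=\sum_{m=1}^{k-1}\binom{k-1}{m}2^{m-1}\,s_2^{k-1-m}(\bx)\bigl(s_{m+1}^2(\bx)-s_{2m+2}(\bx)\bigr),
\]
and splitting $\Gamma g=f_2-f_1$ into the $s_{m+1}^2$--parts and the $s_{2m+2}$--parts picks out $(k-1)s_2^k$ inside $f_2$ (from the $m=1$ contribution). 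Because each merge strictly increases the sum of squares, $r\mapsto S(r)$ is non-decreasing, so $\sup_{r\le t}g(\bX(r))=S(t)^{k-1}$, which is integrable by the inductive hypothesis. Once the remaining hypothesis $\E\int_0^t f_1(\bX(r))\,dr<\infty$ is established, Lemma~\ref{lem_recurrent_property} yields $\E\int_0^t f_2(\bX(r))\,dr<\infty$, and in particular $\E\int_0^t S(r)^k\,dr<\infty$. Monotonicity of $S$ then gives $\E S(s)^k\le (t-s)^{-1}\,\E\int_s^t S(r)^k\,dr<\infty$ for every $0\le s<t$, and since $t<1/\|\bx\|^2$ is arbitrary we conclude $\E S(s)^k<\infty$ on the entire interval.

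The principal difficulty will be verifying $\E\int_0^t f_1\,dr<\infty$. Each summand of $f_1$ is a product $s_2^{k-1-m}s_{2m+2}$ whose pointwise bound $s_2^{k-1-m}s_{2m+2}\le s_2^k$ (using $s_p\le s_2^{p/2}$ for $p\ge 2$) is \emph{exactly} the quantity whose expectation we are trying to control, so no naive pointwise comparison will do. I plan to break the circularity by a secondary induction in which the required integrated products are built up by repeated applications of Lemma~\ref{lem_recurrent_property}, starting from the companion identity $\Gamma s_l(\bx)=\tfrac12\sum_{m=1}^{l-1}\binom{l}{m}\bigl(s_{m+1}s_{l-m+1}-s_{l+2}\bigr)$ and its product-rule analogues. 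The crucial feature anchoring this scheme is that the negative part of $\Gamma s_l$ is the single power sum $s_{l+2}$, whose integrated expectation is finite by Lemma~\ref{lem_finitenes_of_expected_S_n} for \emph{every} $l\ge 2$; the scheme then climbs through auxiliary $g$'s of the form $s_2^j s_l$ of increasing $s_2$--degree, each time reducing the $f_1$--ingredient either to a previous step of the secondary induction or to an object handled directly by Lemma~\ref{lem_finitenes_of_expected_S_n}.
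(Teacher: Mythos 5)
Your overall plan matches the paper's: compute the generator on polynomial functionals of the power sums, invoke Lemma~\ref{lem_recurrent_property} (whose hypotheses you verify via monotonicity of $S_n(\cdot)$ and Lemma~\ref{lem_finitenes_of_expected_S_n}), and then iterate. Your computation of $\Gamma s_2^{k-1}$ is correct, as is the identification that the hard point is showing $\E\int_0^t f_1\,dr<\infty$, and the base-case observation that $\Gamma s_l$ has the single-factor negative part $s_{l+2}$, which lets you bootstrap $\E\int_0^t s_a(\bX)s_b(\bX)\,dr<\infty$ for all $a,b\ge 2$ --- this is exactly the paper's ``step one.''

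However, the secondary induction as you sketch it has a genuine gap. You propose to climb through ``auxiliary $g$'s of the form $s_2^j s_l$'' with the idea that the $f_1$-ingredients at each stage reduce to a previous stage or to Lemma~\ref{lem_finitenes_of_expected_S_n}. But if you actually compute $\Gamma(s_2^j s_l)$ by the binomial expansion, the negative part contains, besides the two-factor terms $s_2^{j-q}s_{2q+l+2}$, also \emph{three}-factor terms of the form
\[
  s_l\, s_2^{\,j-q}\, s_{2q+2},\qquad q=1,\dots,j,
\]
coming from applying the $s_2$-delta to $(s_2+2x_ix_{j'})^j$ while leaving $s_l$ intact. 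These have lower $s_2$-degree than $g$, but they are products of \emph{three} distinct power sums (with one a power of $s_2$), so they are covered neither by a two-factor hypothesis about $s_2^{j'}s_{l'}$ nor by Lemma~\ref{lem_finitenes_of_expected_S_n}. As written, your scheme never closes. The fix is precisely what the paper does: promote the running hypothesis to a three-factor statement, $\E\bigl(S^{k-1}(t)S_m(t)S_l(t)\bigr)<\infty$ for all $m,l\ge 2$, and do the induction on the $s_2$-degree $k$ with that stronger hypothesis; then every $f_1$-term arising from $\Gamma(s_2^n s_m)$ is of the form $s_2^k s_a s_b$ with $k\le n$, and the induction closes. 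With that one strengthening your outline becomes essentially the paper's proof; without it the circularity you flagged is not actually resolved.
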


\begin{proof} %
  We shall prove the proposition in two steps.
  The goal of {\it step one} is to show that
  \begin{equation} %finiteness of expectation of product
  \label{equ_finiteness_of_expectation_of_product}
  \E \left(S_n(t)S_m(t)\right)<\infty, \quad t \in [0,1/\|\bx\|^2),
  \end{equation}
  for all $n,m\geq 2$. 
  
  We start by computing the value of the generator $\Gamma$ of $\bX(t)$, $t\geq 0$, on  functions $s_n(\bx)$ for $\bx \in \cvd$ and for odd $n=2k+1\geq  3$
  \begin{align*}
    \Gamma s_{2k+1}(\bx)&= \sum_{ i=1 }^{ \infty } \sum_{ j=i+1 }^{ \infty } x_ix_j\left(s_{2k+1}(\bx)+\sum_{ l=1 }^{ 2k } 
    \binom{2k+1}{l} x_i^lx_j^{2k+1-l}-s_{2k+1}(\bx)\right)\\ %C_{2k+1}^l
    &= \sum_{ l=1 }^{ 2k }\binom{2k+1}{l}  \left(\sum_{ i=1 }^{ \infty } \sum_{ j=i+1 }^{ \infty } x_i^{l+1}x_j^{2k-l+2}\right)  \text{ [symmetry about $k$]}\\
    &= \sum_{ l=1 }^{ k } \binom{2k+1}{l} \left( \sum_{ i\not= j } x_i^{l+1}x_j^{2k-l+2} \right) = \text{ [plus/minus diagonal terms]}\\ %C_{2k+1}^l
    &= \sum_{ l=1 }^{ k }\binom{2k+1}{l} \left( s_{l+1}(\bx)s_{2k-l+2}(\bx)-s_{2k+3}(\bx) \right)\\
    &=  \sum_{ l=1 }^{ k } \binom{2k+1}{l} s_{l+1}(\bx)s_{2k-l+2}(\bx)-\sfrac{1}{2}(2^{2k+1}-2)s_{2k+3}(\bx),
  \end{align*}
  where we recognize the final term as $f_2(\bx)-f_1(\bx)$, with both $f_1,f_2$ non-negative.
Therefore,
\[
  S_{2k+1}(t)-\int_{ 0 }^{ t } \left(f_2(\bX(r))-f_1(\bX(r))\right)dr, \quad t\geq 0, 
\]
is a local $(\FF_t)$-martingale. We note that $S_{2k+1}(t)$, $t\geq 0$, is a non-decreasing process. Hence Lemma \ref{lem_finitenes_of_expected_S_n} guarantees
\[
  \E \sup\limits_{ r \in [0,t] }S_{2k+1}(r)\leq \E S_{2k+1}(t)<\infty,
\] 
and also
\[
 \E \int_{ 0 }^{ t } f_1(\bX(r))dr=(2^{2k}-1)\int_{ 0 }^{ t } \E S_{2k+3}(r)dr<\infty  
\]
for every fixed $t \in [0,1/\|\bx\|^2)$.  Due to the above stated continuity of functions $s_n$, $n\geq 2$, all the hypotheses of Lemma~\ref{lem_recurrent_property} are satisfied, yielding
\[
  \int_{ 0 }^{ t } \E S_{l+1}(r)S_{2k-l+2}(r)dr<\infty. 
\]
for all $l \in [k]$ and $t \in [0,1/\|\bx\|^2)$. Using the monotonicity of $S_n(t)$, $t\geq 0$, once again, we derive~\eqref{equ_finiteness_of_expectation_of_product} for all $n,m\geq 2$ such that $n+m\geq 5$ is an odd number.
A similar computation applied to $s_{2k}$ instead of $s_{2k+1}$ yields~\eqref{equ_finiteness_of_expectation_of_product} for all $n,m\geq 2$ and $n+m\geq 4$ an even number.

In {\it step two} we show the following extension: for every $k\geq 1$
%\begin{equation} %
%  \label{equ_finiteness_of_the_expectation_for_S2n_Sm}
%  \E S^{k-1}(t)S_m(t)S_l(t)<\infty, \quad t \in [0,1/\|\bx\|^2),\ \ m,l\geq 2,
%\end{equation}
\begin{equation} %
  \label{E:finiteness_of_the_expectation_hypothesis}
% \max\{\E S^{k+1}(t), 
\E S^{k-1}(t)S_m(t)S_{l}(t)<\infty, \quad t \in [0,1/\|\bx\|^2),\ \ m,l\geq 2,
\end{equation}
by induction in $k$. 
%zzzz Note that if $m=2$ then $l$ can only be $1$, so the hypothesis reads $\E S^{k+1}(t)<\infty$.
Step one serves as the basis, since  it is~\eqref{E:finiteness_of_the_expectation_hypothesis} for $k=1$. 
We left to the reader the even case ($n+m\geq 4$) from step one, and note that we already proved $\E S^2(t)<\infty$ in \cite{Konarovskyi:MC:2020} via a different argument.

We next assume that~\eqref{E:finiteness_of_the_expectation_hypothesis} is true for each $k \in [n]$ and check it for $k=n+1$. Let us apply $\Gamma$ to the product $s_2^ns_m$ (here and several times below we write $s_2$, $s_m$ for $s_2(\bx)$, $s_m(\bx)$, and use binomial formula in order to derive for $m\geq 2$:
\begin{align*}
  \Gamma s_2^n s_m(\bx)&= \sum_{ i=1 }^{ \infty } \sum_{ j= i+1 }^{ \infty } x_ix_j\left( \left(s_2+2x_ix_j\right)^n\left( s_m+\sum_{ l=1 }^{ m-1 } \binom{m}{l} x_i^lx_j^{m-l} \right)-s_2^ns_m \right)\\
  &= \sum_{ i=1 }^{ \infty } \sum_{ j= i+1 }^{ \infty } x_ix_j\left( \sum_{ k=0 }^{ n } \binom{n}{k} s_2^k(2x_ix_j)^{n-k}\left( s_m+\sum_{ l=1 }^{ m-1 } \binom{m}{l} x_i^lx_j^{m-l} \right) -s_2^ns_m\right)\\
  &= \sum_{ i=1 }^{ \infty } \sum_{ j= i+1 }^{ \infty } x_ix_j\Bigg( s_2^n\sum_{ l=1 }^{ m-1 } \binom{m}{l}x_i^lx_j^{m-l}+s_m\sum_{ k=0 }^{ n-1 } \binom{n}{k} 2^{n-k}s_2^kx_i^{n-k}x_j^{n-k}\\
&+\sum_{ k=0 }^{ n-1 }\sum_{ l=1 }^{ m-1 } 
\binom{m}{l} \binom{n}{k}
2^{n-k}s_2^kx_i^{n-k}x_j^{n-k}x_i^lx_j^{m-l}\Bigg).
\end{align*}
As before, we next exchange the order of summation to get that $\Gamma s_2^n s_m(\bx)$ equals
\begin{align*}
& s_2^n\sum_{ l=1 }^{ m-1 } \binom{m}{l} \sum_{ i=1 }^{ \infty } \sum_{ j= i+1 }^{ \infty } x_i^{l+1}x_j^{m-l+1} + s_m\sum_{ k=0 }^{ n-1 } \binom{n}{k}2^{n-k}s_2^k \sum_{ i=1 }^{ \infty } \sum_{ j= i+1 }^{ \infty }x_i^{n-k+1}x_j^{n-k+1}\\
&+\sum_{ k=0 }^{ n-1 }\sum_{ l=1 }^{ m-1 } \binom{m}{l}\binom{n}{k}2^{n-k}s_2^k \sum_{ i=1 }^{ \infty } \sum_{ j= i+1 }^{ \infty }x_i^{n-k+l+1}x_j^{n+m-l-k+1} .
\end{align*}
The middle term can be written already as
\begin{equation}
\label{E:middle}
s_m\sum_{ k=0 }^{ n-1 }\binom{n}{k} 2^{n-k-1}s_2^k\left(s_{n-k+1}^2-s_{2n-2k+2}\right).
\end{equation}
%Here $m$ can have any parity.
We denote the integer part
$\lfloor \frac{ m-1 }{ 2 }\rfloor$
by $\tilde{m}$, and if $\tilde{m}=0$ (meaning $m=2$) the sum from $1$ to $\tilde{m}$ is set to zero. 
With this in mind, again due to binomial symmetry, the first term above becomes
$$
s_2^n\sum_{ l=1 }^{ \tilde{m} } \binom{m}{l}\sum_{ i\not= j } x_i^{l+1}x_j^{m-l+1} + \I_{2\N}\left(m\right)\binom{m}{\sfrac{m}{2}}s_2^n\sum_{ i\not= j } x_i^{m/2+1}x_j^{m/2+1}/2,
$$
while the third term in the above sum (expression for $\Gamma s_2^n s_m(\bx)$) becomes
\begin{align*}
&\sum_{ k=0 }^{ n-1 }\sum_{ l=1 }^{ \tilde{m} }  \binom{m}{l} \binom{n}{k}2^{n-k}s_2^k \sum_{ i\not= j }x_i^{n-k+l+1}x_j^{n+m-l-k+1}\\
&+\I_{2\N}(m) \binom{m}{\sfrac{m}{2}}\sum_{ k=0 }^{ n-1 } \binom{n}{k}
2^{n-k-1}s_2^k \sum_{ i\not= j }x_i^{n-k+m/2+1}x_j^{n-k+m/2+1}\ .
\end{align*}
Now it suffices to observe that 
\begin{equation}
\label{E:first}
\sum_{ i\not= j } x_i^{l+1}x_j^{m-l+1} =s_{l+1}s_{m-l+1}-s_{m+2}, \ \
\sum_{ i\not= j } x_i^{m/2+1}x_j^{m/2+1}/2 = s_{m/2+1}^2-s_{m+2}, 
\end{equation}
and similarly that
\begin{align}
\sum_{ i\not= j }x_i^{n-k+l+1}x_j^{n+m-l-k+1}= s_{n-k+l+1}s_{n+m-l-k+1}-s_{2n-2k+m+2},\nonumber \\
\sum_{ i\not= j }x_i^{n-k+m/2+1}x_j^{n-k+m/2+1}= s_{n-k+m/2+1}^2-s_{2n-2k+m+2}, \label{E:third}
\end{align}
where $m/2$ above is assumed to be an integer in (\ref{E:first}--\ref{E:third}).
The reader will now easily see from \eqref{E:middle}--\eqref{E:third} and previous discussion that $ \Gamma s_2^n s_m(\cdot)$ can be written as a difference of two non-negative functions $f_2(\cdot)$ and $f_1(\cdot)$, where $f_2$ is a finite sum of positive multiples of 
 $s_2^ns_{l+1}s_{m-l+1}$, with $l \in [\lfloor m/2\rfloor]$,
 as well as positive multiples of $s_m s_2^k s_{n-k+1}^2$ with $k \in [n-1]$, and other similar terms. 
Furthermore it is important here that $f_1$ is a finite sum of positive multiples of terms of the form $s_2^{n-1}s_2 s_{m+2}$, or $s_2^k s_m s_{2n-2k+2}$ or $s_2^k s_{2n-2k+m+2}$ with $k\in \{0\}\cup [n-1]$.
 Therefore the induction hypothesis \eqref{E:finiteness_of_the_expectation_hypothesis}, together with monotonicity of each process $S_k(t)$ will guarantee the condition
 \[
  \E\int_{ 0 }^t  f_1(\bX(r))dr<\infty, 
\]
 of Lemma~\ref{lem_recurrent_property}
as in step one of the proof.
It seems simpler here and in the next paragraph to treat the case $m=2$ (where only the middle summand (\ref{E:middle}) exists) separately.

Hence 
\[
  \E\int_{ 0 }^t  f_2(\bX(r))dr<\infty, 
\]
for every $t \in (0,1/\|\bx\|^2)$. In particular,
\[
  \E \int_{ 0 }^{ t } S^{n}(r)S_{l+1}(r)S_{m-l+1}(r)dr<\infty, \quad \forall l \in [\lfloor m/2\rfloor]. 
\]
Since $m\geq 2$ was arbitrary and $(S_m(t), t \geq 0)$ is monotone non-decreasing for each $m\geq 2$, we arrive to~\eqref{E:finiteness_of_the_expectation_hypothesis} for $n+1$, and therefore for all $n\in \N$. Note that any two $(l_1,m_1)$, where $2\leq l_1\leq m_1$ can be represented as $(l+1,m-l+1)$ for some $m\geq 2$, $l \in [\lfloor m/2\rfloor]$. The statement of the proposition directly follows from~\eqref{E:finiteness_of_the_expectation_hypothesis}
with $m=2$ and $l=1$.
\end{proof}

\subsection{Extension of Theorem \ref{the_finiteness_of_fourth_moment} to all times}
In this section we present a ``finite modification argument'' which ends the proof.
We wish to warn the reader that, unlike most of the reasoning written in previous sections, this part of the proof is given in the appendix to \cite{Konarovskyi:MC:2020} for the special case $n=4$. 
Since in \cite{Konarovskyi:MC:2020} we used different, and more complicated notation, adapted to the study of stochastic block model and its continuum counterparts, it seems reasonable to also provide a sketch here using our current notation.

Let $\bX(\cdot;\bx)$ be the \MC\ started at time $0$ from $\bx \in \cvd$.
We know that with probability one, for all $t\geq 0$, $\bX(t;\bx) \in \cvd$, and in addition we know that if $\|\bx\|^2 t <1$, then for each $n\geq 2$ and $t\geq 0$
$$
\E(S_2(t;\bx))^{n/2} = \E \|\bX(t;\bx)\|^n <\infty.
$$
Now take any $\bx\in \cvd$ and $t\geq 1/\|\bx\|^2$ and
and let $m,M \in \N$ sufficiently large so that the vector
  \begin{equation}
  \label{def_x_grinded}
    \bx^g=\left( \frac{ x_1 }{ M },\dots,\frac{ x_1 }{ M },\frac{ x_2 }{ M },\dots,\frac{ x_2 }{ M },\dots,\frac{ x_m }{ M },\dots,\frac{ x_m }{ M },x_{m+1},x_{m+2}, \dots \right)\!,
  \end{equation}
  obtained by ``grinding'' the first $m$ components (blocks) of $\bx$ each into $M$ new components (blocks) of equal mass, has sufficiently small $l^2$ norm. More precisely, we take $m,M \in \N$ so that
  \[
    t\left\|\bx^g\right\|^2= t\left(\frac{ x_1^2 }{ M }+ \frac{ x_2^2 }{ M }+\dots+\frac{ x_m^2 }{ M }+ x_{m+1}^2+x_{m+2}^2+\dots\right)< \frac{1}{ 2 }.
  \]
  Then $\E \|\bX(2t;\ord(\bx^g))\|^n< +\infty$ due to Proposition~\ref{pro_finiteness_of_fourth_moment}.

For blocks with indices $i_1,i_2,\ldots, i_k$ we say that they {\em connect directly} at time $t$ if $G_t(\bx;\bA)\cap \{i_1,\ldots,i_k\}$ is a connected graph.
Let us assume that $x_1>x_2>x_3\ldots >x_m$ and that $\{x_i/M\}_{i\leq m} \cap \{x_k\}_{k\geq m+1}=\emptyset$, the argument is entirely analogous (but more tedious to write) otherwise. 
Note that the event $A$ on which at time $t$ the $M$ initial blocks of mass $x_1/M$ connect directly, the $M$ initial blocks of mass $x_2/M$ connect directly, $\ldots$, and the $M$ initial blocks of mass $x_m/M$ connect directly, has strictly positive probability. Of course there will be (infinitely many) other merging events occurring during $[0,t]$, which will involve these and other initial blocks. But these extra mergers only help in increasing the $l^2$ norm of $\bX$ at time $t$, and subsequently at time $2t$.
%Of course, during $[t,2t]$ additional merging will occur. 
It is not hard to see that the Markov property of the \MC\ implies
\begin{align*}
\infty>\E \|\bX(2t;\ord(\bx^g))\|^n &= \E \left(\E \left( \|\bX(2t;\ord(\bx^g))\|^n | \FF_t\right)\right) \\
&\geq 
\E \left(\E \left( \|\bX(2t;\ord(\bx^g))\|^n | \FF_t\right) \I_A\right)\\
&= \E \left(\E \|\tilde{\bX}(t;\bX(t;\ord(\bx^g))) \|^n \I_A\right),
 \end{align*} 
where $\tilde{\bX}(\cdot;\bX(t;\ord(\bx^g)))$ evolves, conditionally on $\FF_t$, as the \MC\ started from $\bX(t;\ord(\bx^g))$.
From previous discussion we see that on $A$ the random variable $\|\tilde{\bX}(t;\bX(t;\ord(\bx^g)))\|$ stochastically dominates $\|\bX(t;\bx)\|$ from above, therefore it is impossible that $\E\|\bX(t;\bx)\|^n=\infty$.
By varying $t\geq 1/\|\bx\|^2$ and recalling Proposition \ref{pro_finiteness_of_fourth_moment} we obtain Theorem \ref{the_finiteness_of_fourth_moment}.
\begin{remark}
As already mentioned, the above argument was written in detail in~\cite{Konarovskyi:MC:2020} using a graphical construction and notation analogous to that from Section~\ref{sub:graph_construction}. 
In the construction of $(G_t(\bx;\bA),\, t\geq 0)$ the family of edges arriving during $[0,t]$ and the family of edges arriving during $(t,2t]$ are mutually independent, implying the Markov property of $\bX$. 
Event $A \in \FF_t$ is independent from the $\sigma$-field generated by the edges connecting before time $t$
 pairs of blocks with masses $x_i/M$ and $x_j/M$ where $i\neq j$, the edges connecting pairs of blocks such that at least one of the blocks is not among the $M \cdot m$ ``crumbs'' with masses listed as the first  $M \cdot m$ components of $\bx^g$, as well as the edges arriving after time $t$. 
The facts that edges are only accumulating (and never deleted) over time, and that the $l^2$ norm is monotone increasing with respect to the subgraph relation, gives the key stochastic domination property used above.
\end{remark}

\section{Consequences for excursion processes}
\label{sec:conseq_excursions}
Recall the notation from Section \ref{sec:preliminaries} leading to the statement of Theorem~\ref{the_finiteness_of_the_second_moments_of_emc}.
In this section we give the proof of this theorem, starting with the case of negative (coalescent time) $t$. 

\begin{lemma} %negative times
  \label{lem_negative_times}
  Let the constant $D_2$ be defined in Lemma~\ref{lem_finitenes_of_expected_S_n}. Then for every $t<0$ the inequality 
  \[
    \E \sum_{ \gamma \in \Gamma^{\kappa,t,\bc} } |\gamma|^2\leq  - \frac{ D_2 }{ t }
  \]
 is satisfied for every $(\kappa,t,\bc) \in \cI$.
\end{lemma}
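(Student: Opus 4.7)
The plan is to bound $\E\sum_{\gamma\in\Gamma^{\kappa,t,\bc}}|\gamma|^2=\E\|\bY\|^2$, where $\bY\in\cvd$ denotes the decreasing rearrangement of the excursion lengths of $B^{\kappa,t,\bc}$. By the excursion characterisation recalled before Theorem~\ref{the_finiteness_of_the_second_moments_of_emc}, $\bY$ has the law of the value at (``coalescent'') time $t$ of the extremal eternal \MC\ with parameters $(\kappa,0,\bc)$. I will obtain $\bY$ as the weak limit of standard (finite-start) multiplicative coalescents to which Lemma~\ref{lem_finitenes_of_expected_S_n} applies, and then pass to the limit in the resulting moment bound.

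By the convergence-to-eternal-\MC\ results of \cite{Aldous:1997,Aldous:1998}, one can choose initial configurations $\bx^n\in\cvd$ with $\|\bx^n\|^2\to 0$ together with times $T_n$ satisfying $T_n\|\bx^n\|^2=1+o(\|\bx^n\|^2)$, such that the \MC\ $\bX^n$ started at time $0$ from $\bx^n$ satisfies $\bX^n(T_n+t)\Rightarrow\bY$ in $\cvd$. The prototypical realisation is Aldous' original construction in the case $(\kappa,\bc)=(1,\mathbf{0})$: take $\bx^n=n^{-2/3}(1,\dots,1)$ with $n$ entries and $T_n=n^{1/3}$, so that $T_n\|\bx^n\|^2=1$ exactly. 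For general $(\kappa,\bc)\in\cI$ an analogous scheme adds to $\bx^n$ a few ``macroscopic'' components whose masses approach the entries of $\bc$.

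For every $n$ large enough that $(T_n+t)\|\bx^n\|^2<1$ (which holds eventually since $t<0$), Lemma~\ref{lem_finitenes_of_expected_S_n} applied with exponent $2$ yields
\[
\E\|\bX^n(T_n+t)\|^2 \leq \frac{D_2\|\bx^n\|^2}{1-(T_n+t)\|\bx^n\|^2} = \frac{D_2\|\bx^n\|^2}{-t\|\bx^n\|^2+o(\|\bx^n\|^2)} \xrightarrow[n\to\infty]{} -\frac{D_2}{t}.
\]
The rate condition $T_n\|\bx^n\|^2=1+o(\|\bx^n\|^2)$ is essential here: it ensures that the leading term $-t\|\bx^n\|^2$ in the denominator dominates the error, producing exactly $-D_2/t$ in the limit rather than $0$ or $+\infty$.

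Finally, I pass to the limit by weak convergence. The functional $\bx\mapsto\|\bx\|^2=\sup_M\sum_{i\leq M}x_i^2$ is lower semi-continuous on $\cvd$ in the coordinate-wise topology, hence so is each bounded truncation $\bx\mapsto\|\bx\|^2\wedge M$. The Portmanteau theorem then gives $\E[\|\bY\|^2\wedge M]\leq\liminf_n\E\|\bX^n(T_n+t)\|^2\leq -D_2/t$, and monotone convergence as $M\to\infty$ concludes. The main obstacle I foresee is verifying that the approximating sequence can be chosen with the precise rate $T_n\|\bx^n\|^2-1=o(\|\bx^n\|^2)$ in the full generality of $(\kappa,\bc)\in\cI$; for the standard Aldous case all computations are explicit and the argument collapses to essentially one line after Lemma~\ref{lem_finitenes_of_expected_S_n}.
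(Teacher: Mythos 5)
Your argument follows the paper's route almost exactly: approximate the excursion-length vector by finite-start coalescents run to time $1/\|\bx^n\|^2 + t$, apply Lemma~\ref{lem_finitenes_of_expected_S_n}, and pass to the limit (you via Portmanteau plus truncation plus monotone convergence, the paper via Skorohod representation plus Fatou --- interchangeable). The ``main obstacle'' you flag at the end is not one: the convergence you need is exactly Lemma~8, Proposition~7 and Theorem~3 of \cite{Aldous:1998} with $T_n = 1/\|\bx^n\|^2$, so $T_n\|\bx^n\|^2 = 1$ identically and no $o(\|\bx^n\|^2)$ bookkeeping is required (the paper records the explicit $\bx^n$ for both $\kappa>0$ and $\kappa=0$); one small slip is that weak convergence in $\cvd$ is with respect to the $l^2$ metric, not the coordinate-wise topology, but since $\|\cdot\|^2$ is $l^2$-continuous your lower-semicontinuity step is harmless.
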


\begin{proof} %
  Let $\bX(t;\bx)$ be the MC\ started at time 0 from $\bx \in \cvd$.  According to Lemma~8, Proposition~7 and Theorem~3 of~\cite{Aldous:1998}, there exists a sequence $\bx^{n} \in \cvd$ such that $\|\bx^n\| \to 0$ and 
  \[
    \bX\left( \frac{1}{ \|\bx^n\|^2 }+t ; \bx^n\right) \to \bZ(t) \quad \mbox{in}\ \ \cvd,
  \]
  weakly as $n\to\infty$, where $\bZ(t)$ is distributed as the ordered sequence of lengths of excursions from $\Gamma^{\kappa,t,\bc}$. To be more precise, the sequence $\bx^{n}$, $n\geq 1$, can be chosen as follows. If $\kappa>0$ then $\bx^n$ consists of $n$ entries of size $\kappa^{-1/3}n^{-2/3}$, preceded by entries $(c_1 \kappa^{-2/3}n^{-1/3},\dots,c_{l(n)}\kappa^{-2/3}n^{-1/3})$, where $l(n) \to \infty$ sufficiently slowly. In the case $\kappa=0$ and $\bc \in \Lspec$, one can take $\bx^n$ to consist of entries $(c_1n^{-1/3},\dots,c_{l(n)}n^{-1/3})$, where $l(n) \to \infty$ fast enough so that $\sum_{ i=1 }^{ l(n)}c_i^2  \sim n^{1/3}$ (see the proof of Lemma~8~\cite{Aldous:1998}).

  We note that 
  \[
    \left( \frac{1}{ \|\bx^n\|^2 }+t \right)\cdot \|\bx^n\|^2=1+t \|\bx^n\|^2<1
  \]
  for $t<0$. Therefore, we may use Lemma~\ref{lem_finitenes_of_expected_S_n} to estimate the expectation 
  \[
    \E \left\|\bX\left( \frac{1}{ \left\|\bx^n\right\|^2 }+t;\bx^n \right)\right\|^2< \frac{ D_2 \|\bx^n\|^2 }{ -t\|\bx^n\|^2 }= - \frac{ D_2 }{ t }.
  \]
  Passing to the limit as $n\to\infty$ and using Fatou's lemma and Skorohod Theorem~3.1.8~\cite{Ethier:1986}, we obtain the statement of the lemma.
\end{proof}

\begin{remark}
It is somewhat surprising that the proof for non-negative times turns out to be less direct. 
A technical obstacle is that Lemma~\ref{lem_finitenes_of_expected_S_n}  cannot  apply any longer, since the upper bound used above diverges at $t=0$.
An obstacle in practice is that for positive $t$ the auxiliary process $W^{\kappa,t,\bc}$ has for small positive $s$ an ``extra push'' in terms of a positive inhomogeneous drift (if $\kappa >0$ this push has value $t-\kappa s$ at time $s$) which can (and does) increase the length of the initial (size-biased ordered) excursions of $B^{\kappa,t,\bc}$.
This increase does not change the finiteness of the square of $l^2$ norm almost surely. 
And the same should be true for the mean. 
\end{remark}

Let us assume that Theorem \ref{the_finiteness_of_the_second_moments_of_emc} fails, or equivalently, that
for some $\kappa,\bc$ and $t\geq 0$ it is true that
$$
   \E\sum_{ \gamma \in \Gamma^{\kappa,t,\bc} }   |\gamma|^2=\infty.
$$
Without loss of generality we may assume that $t>0$ (this is the \MC\ time, and its $l^2$ norm increases in time).
For the same reason, for this $(\kappa,t,\bc)$ and any $a>0$
\begin{equation}
\label{E:thm4point1contra}
   \E\sum_{ \gamma \in \Gamma^{\kappa,t + a,\bc} }   |\gamma|^2=\infty.
\end{equation}
Recall the ``collor and collapse'' (denoted by ${\rm  COL}$) operation from \cite{Aldous:1998} Section~5. 
In words, ${\rm COL}(\bx;\bc)$ is obtained by Poisson marking of the blocks (with respective masses $x_1, x_2,\ldots$) by points of one or more (or countably many) colors (to each $j$ such that $c_j>0$ correspond marks of the $j$th color, they are distributed at rate $c_j$ per unit mass, and the point processes of marks are independent over $j$) and then simultaneously merging together any pair of original blocks which have at least one mark of same color.  
If $\bX$ is random (this will be true below), ${\rm COL}(\bX;\bc)$ always supposes that the point processes of marks are not only mutually independent, but also independent from $\bX$. 
Similarly, recall the ``join'' (denoted by $\bowtie$) operator:
for  $\bx$ and $\by$ in $\cvd$, $\bx \bowtie \by \in \cvd$ is defined as the non-increasingly ordered listing of all the components from $\bx$ and from $\by$.

% and consider $\bc \bowtie c^*$.
Let $(X^{\kappa, \bc}(u), u \in \R)$ be eternal \MC\ such that, for each $u$, $X^{\kappa, \bc}(u)$ has the law equal to that of the vector of ordered excursion lengths of $B^{\kappa,u,\bc}$.
Then ${\rm COL}(X^{\kappa, \bc};c^*)$ is the eternal \MC\ such that ${\rm COL}(X^{\kappa, \bc};c^*)(u)=
{\rm COL}(X^{\kappa, \bc}(u);c^*)$ equals in law to the ordered excursion length vector of $B^{\kappa,u+(c^*)^2,\bc\bowtie c^*}$.
Furthermore, ${\rm COL}(X^{\kappa, \bc};(c_1^*,c_2^*,\ldots,c_k^*))$ is the eternal \MC\ such that its law at time $u$ is that of  the ordered excursion lengths of $B^{\kappa,u+\sum_{i=1}^k(c_i^*)^2,\bc\bowtie (c_1^*,\ldots,c_k^*)}$.

Now for $\kappa,t$ and $\bc$ fixed above, supposing that $\bc \in \cvt\setminus \cvd$, consider $u:=-t <0$ and find the smallest $m$ such that $\sum_{i=1}^m (c_i^*)^2\geq 2t$.
On the one hand, we know from Lemma~\ref{lem_negative_times} that 
$$
 \E\sum_{ \gamma \in \Gamma^{\kappa,u,(c_{m+1},c_{m+2},\ldots)} }   |\gamma|^2 < \frac{D_2}{-u}= \frac{D_2}{t}.
$$
On the other hand, we know from the just made observations that 
$$
 \E\sum_{ \gamma \in \Gamma^{\kappa,t + a,\bc} }   |\gamma|^2 = \infty,
$$
where $a=\sum_{i=1}^m (c_i^*)^2-2t\geq 0$,
and that the vector of ordered  lengths  $|\gamma|$, where $\gamma$ ranges over $\Gamma^{\kappa,t + a,\bc} $ has the same law as ${\rm COL}((X^{\kappa,u,(c_{m+1},c_{m+2},\ldots)});(c_1,\ldots,c_m))$.
Recall that $\Gamma^{\kappa,u,(c_{m+1},c_{m+2}\ldots)}$ is the family of excursions of $B^{\kappa ,u,(c_{m+1},c_{m+2},\ldots)}$.
To summarize, we found parameters $\kappa$, $\bar\bc:=(c_{m+1},c_{m+2},\ldots)$ and a negative time $u=-t$ such that the $l^2$ norm of
$\bX^{\kappa,\bar\bc}(u)$
%before applying ${\rm COL}(\cdot;(c_1,\ldots,c_m))$ 
 has finite expectation, 
while after applying ${\rm COL}(\cdot;(c_1,\ldots,c_m))$  the expectation 
of the same quantity becomes infinite.

We can assume (by combining all the finitely many colors into one) WLOG that $m=1$ in the just constructed example. Let us denote again $c_1=c_m$ by $c^*$.
It is interesting here that
coloring $\bX^{\kappa,\bar\bc}(u)$ with intensity $c^*$ or higher yields infinite mean of the $l^2$ norm, while coloring $\bX^{\kappa,\bar\bc}(u)$ with intensity equal to a positive fraction of $c^*$ (this could be $c^*/2$ or $c^*/10^6$, the conclusion will be the same) yields a finite mean of the $l^2$ norm, as long as $u+b<0$ where $b/(c^*)^2$ is the square of this sufficiently small fraction.
This fact is not only counter-intuitive, but also impossible as the following comparison argument shows.

We use a calculus fact: for each $\alpha>0$ the map $\varphi_\alpha: x \mapsto \frac{1-\exp{\{-\alpha c^* x\}}}{1-\exp{\{-c^* x\}}}$ from $(0,\infty)$ to $[0,1]$ admits a continuous extension at $0$ with value $\varphi_\alpha(0):= \alpha$, and satisfies $\lim_{x\to \infty} \varphi_\alpha(x)=1$, therefore 
\begin{equation}
\label{E:varalphastar}
1=
\sup_{x\in [0,\infty)} \varphi_\alpha(x) > \inf_{x\in [0,\infty)} \varphi_\alpha(x) = \min_{x\in[0,\infty)}\varphi_\alpha(x)  =:\varphi_\alpha^* >0.
\end{equation}
The ${\rm COL}$ operation changes the mass of only one (colored) block, it simultaneously deletes all the blocks which merge due to coloring. 
So if the mean $l^2$ norm after coloring is infinite (resp.~finite), it must be due to the fact that the mass of the colored block squared has infinite (resp.~finite) expectation.
In the case of intensity $c^*$ this quantity has value
$$
\sum_i \E (X_i(u))^2 (1-e^{-c^* X_i(u)}) + 
\sum_{i<j} \E X_i(u) X_j(u)(1-e^{-c^*  X_i(u)})(1-e^{-c^*  X_j(u)})=\infty. 
$$
Similarly, if the coloring intensity is $\alpha c^*$ for $\alpha$ suffiiently small, this quantity is
$$
\sum_i \E (X_i(u))^2 (1-e^{-\alpha c^* X_i(u)}) + 
\sum_{i<j} \E X_i(u) X_j(u)(1-e^{-\alpha c^* X_i(u)})(1-e^{-\alpha c^* X_j(u)})<\infty
$$
Due to \eqref{E:varalphastar} the two quantities above must be of the same order, which leads to a contradiction.
This shows  the statement of Theorem \ref{the_finiteness_of_the_second_moments_of_emc}  for $\bc \in \cvt\setminus \cvd$ and $\kappa \geq 0$.

If $\bc \in \cvd$ we cannot use for large times $t$ the same trick of ``stepping  sufficiently far back in time''  and then coloring.
However, we know that here it must be $\kappa >0$, and a clear advanatge here is that the sum of Brownian motion and a concave parabola is a convenient process for precise estimation. 
The argument given below includes stronger estimates than necessary for ending the proof of Theorem \ref{the_finiteness_of_the_second_moments_of_emc}.
They come at little additional cost, and might be  useful for further studies. 
The only restriction on $\bc$ is that it is a vector in $\cvt$. 
In particular, the argument below reproves the theorem in the case where $\kappa>0$ and $\bc \in \cvt \setminus \cvd$. 

\medskip
\begin{proof}[Proof of Theorem~\ref{the_finiteness_of_the_second_moments_of_emc} assuming $\kappa>0$.]
  Let $(\kappa,t,\bc) \in \cI$ be fixed.  Since $S_2(t)$ is monotone (non-decreasing, in fact increasing) in $t$, we may assume that $t$ is strictly positive.

Here we present a different way of exploiting the (uniform) bound from Lemma \ref{lem_negative_times}.
We introduce 
$t':=\frac{ 2t }{ \kappa }$ and
\begin{equation} %definition of sigma kappa
  \label{equ_definition_of_sigma_kappa}
  \begin{split}
    \sigma_{\kappa,t,\bc}&= \inf\left\{ s\geq 0:\ B^{\kappa,t,\bc}\left(t' +s\right)\leq 0 \right\}\\
    &= \inf\left\{ s\geq 0:\ W^{\kappa,t,\bc}\left(t'+s \right)= \min\limits_{ r \in [0,t'] }W^{\kappa,t,\bc}(r) \right\}.
  \end{split}
\end{equation}

\begin{proposition} %finiteness of stopping time
  \label{pro_finiteness_of_stopping_time}
  For every $(\kappa,t,\bc) \in \cI$, $t,\kappa>0$, and $n \in \N$, $\E\sigma_{\kappa,t,\bc}^n< +\infty$.
\end{proposition}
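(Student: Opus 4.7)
The plan is to prove that $\sigma_{\kappa,t,\bc}$ has tails decaying faster than any polynomial, after which the identity $\E\sigma_{\kappa,t,\bc}^n=n\int_0^\infty s^{n-1}\p(\sigma_{\kappa,t,\bc}>s)\,ds$ gives finiteness of every moment.  The underlying intuition is that past time $t'=2t/\kappa$ the drift of $W^{\kappa,t,\bc}$ is not only negative but decaying quadratically, overwhelming the square-root scale of the martingale fluctuations.

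First I would set $M(u):=\sqrt{\kappa}W(u)+V^{\bc}(u)$, which is a square-integrable martingale, and record the identity $W^{\kappa,t,\bc}(u)=M(u)+tu-\tfrac12\kappa u^2$.  Substituting $u=t'+s$ and using $\kappa t'=2t$ one checks the clean form
\[
  W^{\kappa,t,\bc}(t'+s)=M(t'+s)-ts-\tfrac12\kappa s^2,\qquad s\geq 0.
\]
The deterministic part $tr-\tfrac12\kappa r^2$ of $W^{\kappa,t,\bc}(r)$ is a parabola opening downward which vanishes at $r=0$ and $r=t'$ and is non-negative on $[0,t']$.  Therefore
\[
  m_{t'}:=\min_{r\in[0,t']}W^{\kappa,t,\bc}(r)\ \geq\ \min_{r\in[0,t']}M(r).
\]
On the event $\{\sigma_{\kappa,t,\bc}>s\}$ one has $W^{\kappa,t,\bc}(t'+s)>m_{t'}$, and combining the two displays above I obtain
\[
  2\sup_{r\leq t'+s}|M(r)|\ \geq\ M(t'+s)-\min_{r\leq t'}M(r)\ >\ ts+\tfrac12\kappa s^2\ \geq\ \tfrac12\kappa s^2,
\]
whence the crucial reduction
\[
  \p(\sigma_{\kappa,t,\bc}>s)\ \leq\ \p\bigl(\sup_{r\leq t'+s}|M(r)|>\tfrac{\kappa}{4}s^2\bigr).
\]

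To finish I would apply Doob's $L^p$-inequality together with a polynomial-in-$u$ bound for $\E|M(u)|^p$.  The Brownian component contributes $\E|\sqrt{\kappa}W(u)|^p=O(u^{p/2})$ classically, while for $V^{\bc}$ the Burkholder--Davis--Gundy inequality for purely discontinuous martingales reduces the job to estimating moments of the quadratic variation $[V^{\bc}](u)=\sum_ic_i^2\I_{\{\xi_i\leq u\}}$.  The first-moment bound
\[
  \E[V^{\bc}](u)=\sum_ic_i^2(1-e^{-c_iu})\leq C+u\|\bc\|_3^3
\]
follows at once from $\bc\in\cvt$ and $1-e^{-x}\leq x$, and higher moments can be handled in the same spirit by iterating the ``small versus large $c_i$'' split.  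Plugging $\E|M(t'+s)|^p\leq C_p(1+s^{p/2})$ into Doob and then Markov yields $\p(\sigma_{\kappa,t,\bc}>s)=O(s^{-3p/2})$ for every $p\geq 2$, and $\E\sigma_{\kappa,t,\bc}^n<\infty$ follows from the tail-integration formula by choosing $p$ with $3p/2>n$.

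The main obstacle is not the reduction step but the uniform-in-time moment control of the pure-jump martingale $V^{\bc}$, especially when $\bc\in\cvt\setminus\cvd$ so that the formal compensator $\sum_ic_i^2 u$ may itself be infinite.  This is precisely where the assumption $\|\bc\|_3^3<\infty$ must be exploited carefully; once polynomial moment growth of $V^{\bc}$ is in hand, the remainder of the argument is essentially Doob plus Markov.
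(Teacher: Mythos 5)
Your reduction to the event $\{\,M(t'+s)-\min_{r\le t'}M(r)>ts+\tfrac12\kappa s^2\,\}$ is correct, and the algebraic simplification $W^{\kappa,t,\bc}(t'+s)=M(t'+s)-ts-\tfrac12\kappa s^2$ is a nice observation. However, there is a genuine gap in the moment step: $M(u)=\sqrt{\kappa}W(u)+V^{\bc}(u)$ is \emph{not} a martingale. As recalled in the paper's own Lemma~\ref{lem_finiteness_of_e_min_of_v}, $V^{\bc}$ is a supermartingale with Doob--Meyer decomposition $V^{\bc}=M^{\bc}-A^{\bc}$, where $A^{\bc}(s)=\sum_i c_i^2(s-\xi_i)^+$ is an increasing compensator. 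Your application of BDG and the claim $\E|M(u)|^p\le C_p(1+u^{p/2})$ treat $V^{\bc}$ as if it were the martingale $M^{\bc}$; this is false. In fact $\E A^{\bc}(u)$ can grow as fast as $o(u^2)$ (and for $\bc\in\cvt\setminus\cvd$ there is no uniform $O(u^{2-\varepsilon})$ bound over $\bc$), so $\E|M(u)|^p$ can be nearly $u^{2p}$ and your Doob/Markov step $\p\bigl(\sup_{r\le t'+s}|M(r)|>\tfrac{\kappa}{4}s^2\bigr)\lesssim \E|M(t'+s)|^p/s^{2p}$ does not yield any useful decay.

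The reduction to $2\sup_{r\le t'+s}|M(r)|$ is also too lossy: it symmetrizes a one-sided situation. The dangerous term is the negative drift $-A^{\bc}$, and you need to exploit its sign. A fix along your lines would note that $A^{\bc}\ge 0$ gives $M(t'+s)\le \sup_{r\le t'+s}\bigl(\sqrt{\kappa}W(r)+M^{\bc}(r)\bigr)$, which is a running max of a genuine martingale with $\langle\cdot\rangle_u\le \kappa u + u\|\bc\|_3^3$, hence $O_p(u^{1/2})$; and $-\min_{r\le t'}M(r)$ lives on a fixed compact interval so its tails can be handled separately (this is where the full strength of Lemma~\ref{lem_finiteness_of_e_min_of_v} is needed, precisely to control $A^{\bc}(t')$ when $\bc\notin\cvd$). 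The paper instead argues via exponential (Chernoff) bounds: Lemmas~\ref{lem_estimate_of_ev} and \ref{lem_finiteness_of_e_min_of_v} establish finite exponential moments of $V^{\bc}(s)$ and $\max_{[0,t']}|V^{\bc}|$, and the fresh Brownian increment $W(t'+s)-W(t')$, being independent of the past, contributes a Gaussian tail against the quadratic gap $\tfrac12\kappa s^2-ts$. This yields exponential decay of $\p(\sigma>s)$ outright, without needing to chase the $p\to\infty$ limit in a polynomial-moment argument. So your plan is recoverable in spirit, but the martingale misidentification is a real error, and the symmetrized $\sup|M|$ reduction as written cannot deliver the required tail decay.
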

The proof is postponed until Section~\ref{sec:proof_pro_finiteness_of_stopping_time}.
The auxilliary time $t'$ is convenient for our purposes since 
% for the auxilliary process $W^{\kappa,t,\bc}$ 
 the parabola $s \mapsto ts - \sfrac{\kappa}{2}s^2$ starts to decrease at $t'/2$ and turns negative right after $t'$.
We split the family of excursions $\Gamma^{\kappa,t,\bc}$ into three subfamilies, according to whether they end before $t'$, start after $t'$ or traverse $t'$. 
The contributions coming from the first subfamily are easily controlled, those from the second family will be handled due to a comparison with a $(\kappa,\bar{t},\bar{\bc})$--setting where $\bar{t}$ is negative.
The third family clearly consists of a single (random) element of $\Gamma^{\kappa,t,\bc}$ which traverses (or includes) $t'$, and Proposition \ref{pro_finiteness_of_stopping_time} is used to bound the second moment of its length.

 %
%We remind that 
%  \[
%    \E\sum_{ \gamma \in \Gamma^{\kappa,t,\bc} }   |\gamma|^2< \infty,
%  \]
%  for every $t<0$, according to Lemma~\ref{lem_negative_times}. 
  
As just explained, we denote by $\Gamma_{>t'}$ (resp. $\Gamma_{<t'}$) the subfamily of excursions $\gamma$ of the process $B^{\kappa,t,\bc}$ satiffying $l(\gamma)>t'$ (resp. $r(\gamma)<t'$). Let also $\sigma=\sigma_{\kappa,t,\bc}$ be defined by~\eqref{equ_definition_of_sigma_kappa}.
  Denoting the excursion of $B^{\kappa,t,\bc}$ which traverses $t'$ by $\gamma_0$ we can trivially estimate 
  \begin{align*}
    \sum_{ \gamma \in \Gamma^{\kappa,t,\bc} }   |\gamma|^2&= \sum_{ \gamma \in \Gamma_{<t'} }   |\gamma|^2+|\gamma_0|^2+\sum_{ \gamma \in \Gamma_{>t'} }   |\gamma|^2\leq (t')^2+(t'+\sigma)^2+\sum_{ \gamma \in \Gamma_{>t'} }   |\gamma|^2\\
   &\leq 3(t')^2+2\sigma^2+\sum_{ \gamma \in \Gamma_{>t'} }   |\gamma|^2.
  \end{align*}
  Therefore the finiteness of $\E \sum_{ \gamma \in \Gamma^{\kappa,t,\bc} } |\gamma|^2$ would immediately follow from Proposition~\ref{pro_finiteness_of_stopping_time} with $n=2$ and the finiteness of $\E \sum_{ \gamma \in \Gamma_{>t'} }   |\gamma|^2$.

In order to show that $\E \sum_{ \gamma \in \Gamma_{>t'} }   |\gamma|^2<\infty$,
  we introduce the following time-shifted processes:
  \begin{align*}
    W_{t'+}(s):&= \tilde{W}_{t'+}(s)+V_{t'+}(s), \quad s\geq 0,\\
    B_{t'+}(s):&= W_{t'+}(s)-\min\limits_{ r \in [0,s] }W_{t'+}(r), \quad s\geq 0,
  \end{align*}
  where 
  \begin{align*}
    \tilde{W}_{t'+}(s):=\tilde{W}^{\kappa,t}(t'+s)-\tilde{W}^{\kappa,t}(t')=\sqrt{ \kappa }\left( W(t'+s)-W(t') \right)-ts- \frac{1}{ 2 }\kappa s^2,
  \end{align*}
  and 
  \[
    V_{t'+}(s):=V^{\bc}(t'+s)-V^{\bc}(t')=\sum_{ i=1 }^{ \infty } \left( c_i \I_{\left\{ t'<\xi_i\leq t'+s \right\}}-c_i^2s \right).
  \]
In words, the path of $W_{t'+}$ is obtained from the path of $W^{\kappa,t,\bc}$ by translating the origin to the point $(t', W^{\kappa,t,\bc}(t'))$ and ignoring the negative times in this new coordinate system. Then $B_{t'+}$ is obtained from $W_{t'+}$ by the usual reflection above past-minima.

A simple computation shows that $B^{\kappa,t,\bc}(t'+s)=B_{t'+}(s)$ for every $s\geq \sigma$. This can also be verified from a figure depicted the just described coupling of paths of $W^{\kappa,t,\bc}$ and $W_{t'+}$, and therefore of $B^{\kappa,t,\bc}$ and $B_{t'+}$.

Hence, 
  \begin{equation} %estimate of sum of length of excursions t and t plus
  \label{equ_estimate_of_sum_of_length_of_excursions_t_and_t_plus}
    \sum_{ \gamma \in \Gamma_{>t'} }   |\gamma|^2\leq \sum_{ \gamma \in \Gamma_{t'+} }   |\gamma|^2,
  \end{equation}
  where $\Gamma_{t'+}$ denotes the set of excursions above $0$ of the non-negative process $B_{t'+}$.

  Let $(\eta_i)_{i\geq 1}$ be a family of independent Bernoulli distributed random variables, where  $\eta_i$ has success probability $e^{-c_it'}$, for each $i \geq 1$.
Assume that  $(\eta_i)_{i\geq 1}$ is independent of $W$ and $(\xi_i)_{i\geq 1}$. Since the distributions of $\I_{\left\{ t'<\xi_i\leq t'+s \right\}}$ and $\eta_i\I_{\left\{ \xi_i\leq s \right\}}$ coincide for each $i\geq 1$, we conclude that the process $V_{t'+}$ is equal in law to 
  \[
    \sum_{ i=1 }^{ \infty } \left( \eta_ic_i \I_{\left\{ \xi_i\leq s \right\}}-c_i^2s \right),\quad  s\geq 0.
  \]
 Hence the processes
  \[
    W_{\eta}(s):=\sqrt{ \kappa }W(s)-ts- \frac{1}{ 2 }\kappa s^2+\sum_{ i=1 }^{ \infty } \left(\eta_ic_i \I_{\left\{ \xi_i\leq s\right\}}-c_i^2s\right) ,\quad  s\geq 0,
  \]
  and $W_{t'+}$ also have the same law. 

  We observe that 
  \[
    \E\sum_{ i=1 }^{ \infty } c_i^2(1-\eta_i)=\sum_{ i=1 }^{ \infty } c_i(1-e^{-c_it'})\leq \sum_{ i=1 }^{ \infty } c_i^3 t'<\infty.
  \]
  Hence $\sum_{ i=1 }^{ \infty } c_i^2(1-\eta_i)<\infty$ almost surely.
\begin{remark}
It may seem that we do not really have to worry about $\sum_{ i=1 }^{ \infty } c_i^2(1-\eta_i)$ being finite, 
since we could simply drop $-(1-\eta_i)c_i^2 s$ from the $i$th term in the series above, thus making the process decrease less steeply (and have longer excursions), but we would still need to check that the remaining $\sum_{i=1}^\infty \left( \eta_ic_i \I_{\left\{ \xi_i\leq s \right\}}-\eta_i c_i^2s\right)$ converges conditionally, which is equivalent to checking the finiteness of $\sum_{ i=1 }^{ \infty } c_i^2(1-\eta_i)<\infty$. 
\end{remark}
We can now rewrite for  $s\geq 0$
  \begin{align*}
    W_{\eta}(s)&=  \sqrt{ \kappa }W(s)-ts- \frac{1}{ 2 }\kappa s^2 +\sum_{ i=1 }^{ \infty } \left(\eta_ic_i \I_{\left\{ \xi_i\leq s \right\}}-\eta_ic_i^2 s\right)-\sum_{ i=1 }^{ \infty } (1-\eta_i)c_i^2s\\
  &=  \sqrt{ \kappa }W(s)+\left(-t-\sum_{ i=1 }^{ \infty } (1-\eta_i)c_i^2\right)s- \frac{1}{ 2 }\kappa s^2 +\sum_{ i=1 }^{ \infty } \left( \eta_ic_i \I_{\left\{ \xi_i\leq s \right\}}-(\eta_ic_i)^2s\right).
  \end{align*}
Therefore the conditional law of $W_\eta$ given $\sigma(\eta_i,\ i\geq 1)$ 
 is the law of $W^{\kappa,-\bar{t},\bar{\bc}}$, where $\bar{t}=t+\sum_{ i=1 }^{ \infty } (1-\eta_i)c_i^2\geq t$ almost surely, and where $\bar{\bc}=\ord(\eta_1c_1,\eta_2c_2,\dots)$. Hence, recalling  \eqref{equ_estimate_of_sum_of_length_of_excursions_t_and_t_plus} and the just introduced coupling, and applying $\omega$-by-$\omega$ the uniform bound of Lemma~\ref{lem_negative_times} under the conditional expectation yields
  \begin{align*}
    \E \sum_{ \gamma \in \Gamma_{>t'} }   |\gamma|^2 &\leq \E \sum_{ \gamma \in \Gamma_{t'+} }   |\gamma|^2= \E \sum_{ \gamma \in \Gamma_{\eta} }   |\gamma|^2\leq \E\left[ \E\left( \sum_{ \gamma \in \Gamma_{\eta} } |\gamma|^2  \bigg|\sigma(\eta_i,\ i\geq 1) \right) \right]\\
    &\leq \E\left[\E \left(\sum_{ \gamma \in \Gamma^{\kappa,-\bar{t},\bar{\bc}} }   |\gamma|^2\bigg|\sigma(\eta_i,\ i\geq 1)\right)\right]\leq  \E\left[ \frac{ D_2 }{ \bar{t} } \right]\leq \frac{ D_2 }{ t }.
  \end{align*}
As already explained, this completes the proof of the theorem.
\end{proof}

\subsection{Proof of Proposition \ref{pro_finiteness_of_stopping_time}}
\label{sec:proof_pro_finiteness_of_stopping_time}
We need some auxiliary statements. The first one is very easy, yet we emphasize it here in order to facilitate its application in two longer computations which follow.
\begin{lemma}
  \label{lem_estimate_basic}
For every $\lambda,s>0$ and $\xi$ an exponential random variable with rate $c$, we have
$$
\E e^{\lambda \I_{\left\{ \xi_i\leq s \right\}} }  \leq 1+ cs \lambda e^\lambda.
$$
\end{lemma}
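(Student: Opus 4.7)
The plan is to observe that $\I_{\{\xi \leq s\}}$ is a Bernoulli random variable, so the expectation on the left can be computed exactly and then bounded by two elementary inequalities.

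First I would split according to the value of the indicator: since $\lambda \I_{\{\xi \leq s\}}$ takes only the values $0$ and $\lambda$,
\[
\E e^{\lambda \I_{\{\xi \leq s\}}} = \P(\xi > s) + e^\lambda \P(\xi \leq s) = 1 + (e^\lambda - 1)\P(\xi \leq s).
\]
Then I would apply two standard one-variable inequalities: the bound $1 - e^{-x} \leq x$ for $x \geq 0$ gives $\P(\xi \leq s) = 1 - e^{-cs} \leq cs$, and the bound $e^\lambda - 1 \leq \lambda e^\lambda$ (immediate from $e^\lambda - 1 = \int_0^\lambda e^u\,du \leq \lambda e^\lambda$) handles the other factor. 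Multiplying these estimates yields
\[
\E e^{\lambda \I_{\{\xi \leq s\}}} \leq 1 + cs\, \lambda e^\lambda,
\]
as required.

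There is no real obstacle here; the lemma is essentially a one-line computation. The only thing worth noting is that both auxiliary inequalities are tight in the relevant regime (small $cs$, moderate $\lambda$), so the stated bound is the natural ``product'' form one should expect when later summing over the family $\{\xi_i\}_{i \geq 1}$ in the proof of Proposition~\ref{pro_finiteness_of_stopping_time}.
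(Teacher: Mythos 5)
Your proof is correct and essentially identical to the paper's: both compute $\E e^{\lambda \I_{\{\xi\leq s\}}} = 1 + (e^\lambda-1)(1-e^{-cs})$ exactly and then apply the elementary bounds $e^\lambda - 1 \leq \lambda e^\lambda$ and $1-e^{-cs}\leq cs$. You merely spell out the two auxiliary inequalities that the paper leaves implicit.
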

\begin{proof} %
Since $e^{\lambda \I_{\left\{ \xi_i\leq s \right\}} }  = \I_{\{\xi>s\}} + e^\lambda \I_{\{\xi\leq s\}}$, we have
$$
\E e^{\lambda \I_{\left\{ \xi_i\leq s \right\}} }  = e^{-cs} + e^\lambda (1-e^{-cs}) = 1 + (e^\lambda-1)(1-e^{-cs})
\leq 1+  \lambda cs e^\lambda.
$$
\end{proof}

\begin{lemma} %estimate of ev
  \label{lem_estimate_of_ev}
  For every $\bc \in \cvt$, $a,s>0$ and $m \in \N$, we have
  \[
    \ln \E e^{a V^{\bc}(s)}\leq a \sum_{ i=1 }^{ m } c_i+\sum_{ i=1 }^{ m } \ln (1+a sc_i^2)+a^2se^{ac_{m+1}}\|\bc\|_3^3.
  \]
\end{lemma}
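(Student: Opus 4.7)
The plan is to exploit independence of the marks $\xi_i$ to factorise the moment generating function, take logarithms, and then apply Lemma~\ref{lem_estimate_basic} term-by-term with $\lambda=ac_i$, finally splitting the resulting series at index $m$ in the way that produces exactly the two kinds of contributions appearing on the right-hand side.

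First I would write, using the independence of $(\xi_i)_{i\geq 1}$,
\[
\E e^{aV^{\bc}(s)} \;=\; \prod_{i=1}^{\infty} e^{-ac_i^2 s}\,\E e^{a c_i\,\I_{\{\xi_i\leq s\}}},
\]
so that
\[
\ln\E e^{aV^{\bc}(s)} \;=\; \sum_{i=1}^{\infty}\Bigl[-ac_i^2 s \;+\; \ln\E e^{ac_i\,\I_{\{\xi_i\leq s\}}}\Bigr].
\]
Lemma~\ref{lem_estimate_basic} applied with $\lambda=ac_i$ and rate $c$ replaced by $c_i$ gives $\E e^{ac_i\,\I_{\{\xi_i\leq s\}}} \leq 1+ as c_i^{2} e^{ac_i}$, so each summand is at most
\[
-ac_i^2 s+\ln\bigl(1+as c_i^{2} e^{ac_i}\bigr).
\]

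Next I would split the sum at $i=m$. For $i\leq m$ the idea is to drop the non-positive term $-ac_i^2 s$ and use the elementary inequality $1+asc_i^2 e^{ac_i}\leq e^{ac_i}(1+asc_i^2)$ (which reduces to $1\leq e^{ac_i}$), giving
\[
-ac_i^2 s+\ln\bigl(1+asc_i^2 e^{ac_i}\bigr) \;\leq\; ac_i + \ln(1+asc_i^{2}).
\]
This produces the first two pieces of the claimed bound after summation from $1$ to $m$.

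For the tail $i>m$ I would keep the negative drift and combine it with $\ln(1+x)\leq x$:
\[
-ac_i^2 s+\ln\bigl(1+asc_i^2 e^{ac_i}\bigr) \;\leq\; asc_i^2\bigl(e^{ac_i}-1\bigr) \;\leq\; a^{2} s\, c_i^{3}\, e^{ac_i},
\]
using $e^x-1\leq xe^x$ on $[0,\infty)$. Since $\bc$ is non-increasing, $c_i\leq c_{m+1}$ for $i>m$, hence $e^{ac_i}\leq e^{ac_{m+1}}$, and summing yields $a^{2} s\, e^{ac_{m+1}}\sum_{i>m}c_i^{3}\leq a^{2} s\, e^{ac_{m+1}}\|\bc\|_3^{3}$.

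The main obstacle — really the only non-automatic step — is recognising the asymmetric splitting: one should cancel $1+asc_i^2 e^{ac_i}$ into $e^{ac_i}(1+asc_i^2)$ for the first $m$ terms (extracting the potentially large factor $e^{ac_i}$ into a separate linear-in-$c_i$ term so that the remaining $\ln(1+asc_i^2)$ stays summable-free of exponentials), whereas for the tail one instead linearises $\ln(1+\cdot)$ and uses $c_i^3$ summability with a \emph{uniform} exponential prefactor $e^{ac_{m+1}}$. Combining the two bounds term-by-term gives exactly the stated inequality.
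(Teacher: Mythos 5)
Your argument follows the paper's proof step for step: factor by independence, apply Lemma~\ref{lem_estimate_basic} with $\lambda=ac_i$, split the series at $m$, bound the head via $1+asc_i^2 e^{ac_i}\leq e^{ac_i}(1+asc_i^2)$, and bound the tail via $\ln(1+x)\leq x$, $e^x-1\leq xe^x$, and the monotonicity $c_i\leq c_{m+1}$.

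The one step you gloss over is the opening identity
\[
\E e^{aV^{\bc}(s)}=\prod_{i=1}^{\infty} e^{-ac_i^2 s}\,\E e^{ac_i \I_{\{\xi_i\leq s\}}} .
\]
When $\bc\in\cvt\setminus\cvd$ the series defining $V^{\bc}(s)$ converges only conditionally, so the exchange of expectation with the infinite product is not automatic. The paper proceeds more cautiously: it first derives the bound for the $k$-th partial sum $\sum_{i=1}^k(c_i\I_{\{\xi_i\le s\}}-c_i^2 s)$ (where independence gives an honest finite product), observes that the resulting right-hand side is uniform in $k$ once $m$ is fixed, and then passes to the limit using Fatou's lemma applied to $e^{a\sum_{i=1}^k(\cdot)}\to e^{aV^{\bc}(s)}$ a.s. Since we only need the upper bound, Fatou is exactly the right tool and no uniform integrability argument is needed. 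With that remark inserted, your proof is complete and identical in substance to the paper's.
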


\begin{proof} %
  Using Lemma \ref{lem_estimate_basic} with $\lambda =ac_i$ and $c=c_i$ for each $i$, and elementary calculus, we derive for each $m\in \N$ and each finite $k>m$
  \begin{align*}
%    \ln \E e^{a V^{\bc}(s)}&= \lim_{k\to \infty}
\sum_{ i=1 }^{ k } \ln \E e^{a\left( c_i\I_{\left\{ \xi_i\leq s \right\}}-c_i^2s \right)}&\leq \sum_{ i=1 }^{ k } \left[\ln \left( 1+asc_i^2e^{ac_i} \right) -asc_i^2\right]\\
  &\leq \sum_{ i=1 }^{ m } \ln \left( 1+asc_i^2e^{ac_i} \right)+\sum_{ i=m+1 }^{k }\left[\ln \left( 1+asc_i^2e^{ac_i} \right)-asc_i^2\right]\\
    &\leq \sum_{ i=1 }^{ m } \ln \left( (1+asc_i^2)e^{ac_i} \right)+as\sum_{ i=m+1 }^{k } c_i^2(e^{ac_i}-1)\\
    &\leq a \sum_{ i=1 }^{ m } c_i+\sum_{ i=1 }^{ m } \ln (1+a sc_i^2)+a^2se^{ac_{m+1}}\sum_{ i=m+1 }^{ k } c_i^3,
  \end{align*}
where in the last line we used $c_{m+1}\geq c_{m+2}\geq ...$ as a consequence of $\bc \in \cvt$.
The independence of $(\xi_i)_{i\geq 1}$ implies that
$$
\E e^{a \sum_{ i=1 }^{ k }   {\left( c_i\I_{\left\{ \xi_i\leq s \right\}}-c_i^2s \right)} }=
\E \prod _{ i=1 }^{ k }   e^{a\left( c_i\I_{\left\{ \xi_i\leq s \right\}}-c_i^2s \right)}=
\prod_{ i=1 }^{ k } \E e^{a\left( c_i\I_{\left\{ \xi_i\leq s \right\}}-c_i^2s \right)}.
$$
We know that $\sum_{ i=1 }^{ k }   {\left( c_i\I_{\left\{ \xi_i\leq s \right\}}-c_i^2s \right)} $ converges conditionally (but not absolutely unless $\bc\in \cvd$) to $V^\bc(s)$, almost surely, as $k\to \infty$.
Since $\bc \in \cvt$, the above estimate yields, for each fixed $m$, a uniform upper bound on 
$\E e^{a \sum_{ i=1 }^{ k }   {\left( c_i\I_{\left\{ \xi_i\leq s \right\}}-c_i^2s \right)} }$
over $k$, so that Fatou's lemma implies 
the stated claim.
\end{proof}

The next result shows that we can replace in Lemma \ref{lem_estimate_of_ev} the marginal of $V^\bc$ with a maximum of $V^\bc$ on a compact interval, however the upper bound in no longer nicely expressed in terms of the parameters ($a$, the compact interval, and $\bc$).
\begin{lemma} %finiteness of e min of V
  \label{lem_finiteness_of_e_min_of_v}
  For every $\bc \in \cvt$, and $t,a>0$ we have
  \[
    \E e^{a\max\limits_{ s \in [0,t] }|V^{\bc}(s)|}<\infty.
  \]
\end{lemma}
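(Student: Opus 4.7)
The approach is a Doob--Meyer-style decomposition of $V^{\bc}$ into a martingale plus a non-decreasing process, followed by Doob's maximal inequality for the martingale piece and a direct product estimate for the monotone piece. Working with the natural filtration $\FF_s:=\sigma(\xi_i\wedge s,\, i\geq 1)$, each $M_i(s):=c_i\I_{\{\xi_i\leq s\}}-c_i^2(s\wedge \xi_i)$ is a mean-zero $(\FF_s)$-martingale; since $c_i^2 s - c_i^2(s\wedge\xi_i) = c_i^2(s-\xi_i)_+$, setting
\[
M(s) := \sum_{i=1}^\infty M_i(s), \qquad A(s) := \sum_{i=1}^\infty c_i^2(s-\xi_i)_+,
\]
one has $V^{\bc}(s) = M(s)-A(s)$. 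The almost sure finiteness of $A(t)$ follows from $\E\sum_i c_i^2\I_{\{\xi_i\leq t\}}\leq t\|\bc\|_3^3<\infty$, and the $L^2$-convergence of the series for $M$ follows from the variance bound $\sum_i \E M_i(t)^2 = O(\|\bc\|_3^3)$ (after splitting off finitely many large $c_i$).

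Because $A$ is non-negative and non-decreasing, on $[0,t]$ we have the deterministic inequality $|V^{\bc}(s)|\leq |M(s)|+A(t)$, so by Cauchy--Schwarz
\[
\E e^{a\max_{s\in[0,t]}|V^{\bc}(s)|}
\leq \bigl(\E e^{2a\max_{s\in[0,t]}|M(s)|}\bigr)^{\!1/2}\bigl(\E e^{2aA(t)}\bigr)^{\!1/2},
\]
and it suffices to show both factors are finite for every $a>0$. For $A(t)$, bound $A(t)\leq t\sum_i c_i^2\I_{\{\xi_i\leq t\}}$ and use independence to get
$\E e^{bA(t)}\leq \prod_i\bigl[1+(e^{btc_i^2}-1)(1-e^{-c_it})\bigr]$;
isolating the finitely many $c_i$ exceeding any fixed threshold and applying $\ln(1+x)\leq x$, $e^y-1\leq ye^y$, and $1-e^{-z}\leq z$ to the remaining factors bounds the log of the product by a constant multiple of $\|\bc\|_3^3$. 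For $\max|M|$, both $e^{\pm bM(s)}$ are non-negative submartingales (Jensen plus the martingale property), so Doob's maximal inequality gives
$\p(\max_{s\leq t}|M(s)|>y)\leq e^{-by}\bigl(\E e^{bM(t)}+\E e^{-bM(t)}\bigr)$;
integrating the tail yields $\E e^{b'\max|M|}<\infty$ for every $b'<b$, provided $\E e^{\pm bM(t)}<\infty$. This last is handled via the analogous product formula $\E e^{\pm bM(t)}=\prod_i \E e^{\pm bM_i(t)}$, with a direct calculation showing that each log-factor is $O(b^2 c_i^3 t)$ uniformly in $i$ with $c_i$ small, hence summable.

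The main obstacle is not conceptual but rather the book-keeping in the product-formula estimates when $\bc\in\cvt\setminus\cvd$: the sums $\sum c_i^2$ implicit in the definitions of $M$ and $A$ may diverge, so one must work with truncations $\sum_{i\leq N}$ and pass to the limit by monotone or dominated convergence, ensuring every bound is expressed in terms of the finite quantity $\|\bc\|_3^3$ together with the finite contribution from the $c_i$'s above any fixed threshold. Once this is done carefully, the three ingredients (decomposition, Cauchy--Schwarz, and the two product estimates) combine to give the claim.
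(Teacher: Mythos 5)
Your proof is correct and follows the paper's structure exactly through the Doob--Meyer decomposition $V^{\bc}=M-A$, the Cauchy--Schwarz split of $\E e^{a\max|V^{\bc}|}$, and the bound $A(t)\le t\sum_i c_i^2\I_{\{\xi_i\le t\}}$ with the ensuing product estimate in terms of $\|\bc\|_3^3$. Where you diverge is the treatment of $\max_{s\le t}|M(s)|$: the paper cites a ready-made Bernstein-type inequality for martingales with bounded jumps (here the jump bound is $c_1$ and $\langle M\rangle_t\le t\|\bc\|_3^3$ a.s.), obtaining a superexponential tail, whereas you apply Doob's $L^1$ maximal inequality to the exponential submartingales $e^{\pm bM}$ and then estimate $\E e^{\pm bM(t)}$ via the product formula. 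Your route is more elementary and self-contained, but it places the full burden on the Taylor-level cancellation $\ln\E e^{\pm bM_i(t)}=O(b^2 c_i^3 t)$ as $c_i\to 0$: a cruder bound like $\ln\E e^{bM_i(t)}\le C b^2 c_i^2$ (which one gets by dropping the compensator from $M_i$) is useless because $\sum_i c_i^2$ may diverge for $\bc\in\cvt\setminus\cvd$. One can verify the $O(c_i^3)$ claim directly: writing $q=c_i$, $\alpha=(bq^2+q)$, one has $\E e^{bM_i(t)}=1+\frac{e^{bq}-1-bq}{1+bq}\,\bigl(1-e^{-\alpha t}\bigr)\le 1+\tfrac12 b^2q^2e^{bq}\cdot(bq^2+q)t$, which is $1+O(b^2 q^3 t)$ for $q$ below any fixed threshold, so the infinite product (justified via Fatou applied to the partial sums, which converge in $L^2$ by $\E M_i(t)^2\le c_i^3 t$) is finite, and similarly for $-b$. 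You flagged this as ``direct calculation''; it is the one genuinely delicate point of your variant, and spelling it out would make the argument airtight. The paper's Bernstein route sidesteps exactly this computation, at the cost of an external citation.
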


\begin{proof} %
  We recall that $V^{\bc}$ is a supermatringale with Doob-Meyer decomposition
  \[
    V^{\bc}(s)=M^{\bc}(s)-A^{\bc}(s), \quad s\geq 0,
  \]
  where 
  \[
    A^{\bc}(s)=\sum_{ i=1 }^{ \infty } c_i^2(s-\xi_i)^+, \quad s\geq 0,
  \]
  and where $M^{\bc}(s)=\sum_{ i=1 }^{ \infty } \left( c_i \I_{\left\{ \xi_i\leq s \right\}}-c_i^2(\xi_i\wedge s) \right)$, $s\geq 0$, is a martingale with
  \[
    \langle M^{\bc}\rangle_s=\sum_{ i=1 }^{ \infty } c_i^3 (\xi_i\wedge s), \quad s\geq 0,
  \]
as its predictable quadratic variation (for more details see~\cite[Section~2.1]{Aldous:1998}).
Using H\"older's inequality we now get
  \begin{equation} %estimate of e a max
  \label{equ_estimate_of_e_a_max}
    \begin{split} %
      \E e^{a \max\limits_{ s \in [0,t] }|V^{\bc}(s)|}&\leq \E\left[ e^{a \max\limits_{ s \in [0,t] }|M^{\bc}(s)|} \cdot e^{a \max\limits_{ s \in [0,t] }A^{\bc}(s)}\right]\\
      &\leq \left( \E e^{2a \max\limits_{ s \in [0,t] }|M^{\bc}(s)|} \right)^{ \frac{1}{ 2 }} \cdot  \left( \E e^{2a \max\limits_{ s \in [0,t] }A^{\bc}(s)} \right)^{ \frac{1}{ 2 }}.
    \end{split}
  \end{equation}
  Due to the monotonicity of $A^{\bc}$, the independence of $(\xi_i)_{i\geq 1}$, and Lemma \ref{lem_estimate_basic}
we obtain 
  \begin{align}
   \ln \E e^{2a \max\limits_{ s \in [0,t] }A^{\bc}(s)}&= \ln \E e^{2a A^{\bc}(t)}=\ln \E e^{2a \sum_{ i=1 }^{ \infty } c_i^2(t-\xi_i)^+}\leq \ln \E e^{2a \sum_{ i=1 }^{ \infty } c_i^2t \I_{\left\{ \xi_i\leq t \right\}}} \nonumber\\
   &= \sum_{ i=1 }^{ \infty } \ln \E e^{2atc_i^2 \I_{\left\{ \xi_i\leq t \right\}}} \leq \sum_{ i=1 }^{ \infty } \ln \left( 1+2at^2c_i^3e^{2atc_i^2} \right)\nonumber\\
&\leq 2at^2\sum_{ i=1 }^{ \infty } e^{2atc_i^2}c_i^3\leq 2at^2e^{2atc_1^2}\|\bc\|_3^3.
\label{E:exp_bound_norm_A}
  \end{align}
Since $\bc \in \cvt$,  this implies the finiteness of $\E e^{2a \max\limits_{ s \in [0,t] }A^{\bc}(s)}$. 
  
\medskip
It remains to check the finiteness of the first factor on the right hand side in~\eqref{equ_estimate_of_e_a_max}.
In order to do so, we will use the following Bernstein-type inequality for martingales with bounded jumps~\cite[p.899]{Shorack:1986} (see also Theorem~3.3~\cite{Dzhaparidze:2001} for general
square-integrable martingales): set
  \[
  \psi(r):=\frac{ 2 }{ r^2 }\left[ r \left(\ln (r+1)-1\right)+\ln (r+1) \right] = \frac{2}{r^2}\int_0^r \ln(1+y)\,dy , \quad r>0,
  \]
then for any $M$ a local martingale with jumps absolutely bouned by $K$, any time $s\geq 0$ and any two fixed levels $\lambda>0$ and $0<\tau<\infty$ we have
\begin{equation}
\label{E:Bernsteintype}
\p\left[ \sup_{s\in [0,t]} |M(s)| \geq \lambda,\  \langle M\rangle_t\leq \tau \right] \leq 2 \exp\left(-\frac{\lambda^2}{2\tau}\psi\left( \frac{\lambda c}{\tau}\right) \right).
\end{equation}
For our purposes we note that $M^\bc$ has jumps bounded by $c_1$, and that $\langle M^\bc \rangle_t$ is bounded by $t \sum_i c_i^3=t\|\bc\|_3^3$ almost surely.
We can thus apply \eqref{E:Bernsteintype} with $\lambda = r$ and $\tau= t\|\bc\|_3^3$ to get
$$\p\left\{ \max\limits_{ s \in [0,t] }|M^{\bc}(s)|\geq r \right\} = \p\left\{ \max\limits_{ s \in [0,t]} 
|M^{\bc}(s)|\geq r,\ \langle M^{\bc}\rangle_t\leq t\|\bc\|_3^3 \right\} \leq 2e^{-\frac{ r^2 }{ 2 t\|\bc\|_3^3 }\psi\left( \frac{ rc_1 }{ t\|\bc\|_3^3 } \right)}.
$$ 
Since $\psi(r) \geq 2(\ln(r+1)-1)/r$ we conclude that
\begin{equation}
\label{E:M_survival_estimate}
\p\left\{ \max\limits_{ s \in [0,t] }|M^{\bc}(s)|\geq r \right\} \leq 2e^{ -\frac{ r }{ c_1 }\left(\ln\left( 1+  \frac{ rc_1 }{ t\|\bc\|_3^3 } \right)-1\right)}.
\end{equation}
In words, the survival function of $\max\limits_{ s \in [0,t] }|M^{\bc}(s)|$ has superexponentially decreasing tails and now it is easy to see that  \eqref{E:M_survival_estimate} leads to
 \[
      \E e^{2a \max\limits_{ s \in [0,t] }|M^{\bc}(s)|} \leq 1+4a \int_{ 0 }^{ \infty } e^{2ar}\cdot e^{ - \frac{r}{ c_1 }\left(\ln \left( \frac{ rc_1 }{ t\|\bc\|_3^3 }+1 \right)-1\right)}dr< \infty,
\]
which together with  \eqref{equ_estimate_of_e_a_max} and \eqref{E:exp_bound_norm_A} yields the stated claim.
\end{proof}

\begin{proof}[End of the proof of Proposition~\ref{pro_finiteness_of_stopping_time}] %
  We fix $(\kappa,t,\bc)\in \cI$, $\kappa>0$, and recall that $t'= \frac{ 2t }{ \kappa }$. Then for every $s>0$ the following inequality is clearly true 
  \[
    \p\left\{ \sigma_{\kappa,t,\bc}\geq s  \right\}\leq \p\left\{ W^{\kappa,t,\bc}\left( t'+s \right)\geq \min\limits_{ r \in [0,t'] }W^{\kappa,t,\bc}(r) \right\}.
  \]
Here we are comparing the value of $W^{\kappa,t,\bc}\left( t'+s \right)$ for some (think large) positive $s$ to the value of the minimum of $W^{\kappa,t,\bc}$ on a fixed interval $[0,t']$. Our intention is likely already clear to the reader: use Lemmas \ref{lem_estimate_of_ev} and \ref{lem_finiteness_of_e_min_of_v} to ``tame'' the contribution of $V^\bc$ and let the Brownian (Gaussian) component be the driving force in estimating the probability on the right-hand-side in the previous inequality.
Since
  \begin{align*}
    W^{\kappa,t,\bc}\left( t'+s \right)=\sqrt{ \kappa }W\left( t'+s \right)+ts- \frac{1}{ 2 }\kappa s^2+V^{\bc}\left( t'+s \right),
  \end{align*}
  the intequality $W^{\kappa,t,\bc}\left( t'+s \right)\geq  \min\limits_{ r \in [0,t'] }W^{\kappa,t,\bc}(r)$ is equivalent to 
  \[
    \sqrt{ \kappa }\left(W\left( t'+s \right)-W\left( t' \right)\right)\geq -ts + \frac{1}{ 2 }\kappa s^2-U(s)-G,
  \]
  where 
  \begin{align*}
    U(s)&= V^{\bc}\left( t'+s \right) \quad \mbox{and} \quad G=\sqrt{ \kappa } \cdot W(t')- \min\limits_{ r \in [0,t'] }W^{\kappa,t,\bc}(r).
  \end{align*}
  We remark that the process $W'=(W(t'+s)-W(t'), \,s\geq 0)$ is a copy of standard Brownian motion, independent of $\sigma\{U,G\}$. So using conditioning and the well-known estimate for the standard normal survival function
  \[
   1-\Phi(x) \leq \sqrt{\frac{ 2 }{ \pi }} \cdot \frac{1}{ x }  \cdot e^{-\frac{ x^2 }{ 2}}, \ x> 0,
  \]
we can extend the previous bound for any $s\geq 2t'$ to
  \begin{align}
    \p\left\{ \sigma_{\kappa,t,\bc}\geq s \right\}&\leq \p\left\{ \sqrt{ \kappa }W'(s)\geq -ts+ \frac{1}{ 2 }\kappa s^2 -U(s)-G \right\}\nonumber\\
    &=\E\left[\p\left\{ \sqrt{ \kappa }W'(s)\geq ts -U(s)-G\, \big|\, \sigma\{U(s),G\} \right\}\right]\\
 &=\E\left[ 1-\Phi ( (ts -U(s)-G)/\sqrt{s \kappa } )\right]\nonumber\\
    &\leq  \sqrt{\frac{ 2 s \kappa  }{ \pi (1+ts/2)} }\,\E\left[ e^{- \frac{ \left( ts-U(s)-G \right)^2 }{ 2s \kappa }}\I_{\left\{ ts-U(s)-G>1 + ts/2 \right\}} \right]+ \E{ \I_{\left\{ ts-U(s)-G\leq 1+ts/2 \right\}} }\nonumber\\
    &\leq 2\sqrt{\frac{ \kappa  }{ t\pi } }\,\E\left[ e^{- \frac{ \left( ts-U(s)-G \right)^2 }{ 2s \kappa }}\right]+ \p\left\{ ts/2-U(s)-G\leq 1 \right\}.
\label{E:bound_two_summands}
  \end{align}
  Our goal is to show that both terms in \eqref{E:bound_two_summands} decrease exponentially fast as $s \to +\infty$. To this purpose we first estimate 
  \begin{align}
    \ln \E e^{- \frac{ \left( ts-U(s)-G \right)^2 }{ 2s \kappa }}&\leq \ln \E e^{- \frac{ t^2s^2 -2ts(U(s)+G)+(U(s)+G)^2}{ 2s \kappa }}\leq \ln \E e^{- \frac{ t^2s^2 -2ts(U(s)+G)}{ 2s \kappa }}\nonumber \\
    & =-\frac{ t^2s }{ 2 \kappa }+\ln \E \left[ e^{\frac{ t }{ \kappa }U(s) } \cdot e^{\frac{ t G }{ \kappa }} \right] \stackrel{\text{H\"older}}{\leq} -\frac{ t^2s }{ 2 \kappa }+  \ln\left(\E e^{\frac{ 2t }{  \kappa }U(s)}\right)^{1/2}\cdot \ln \left(\E e^{\frac{ 2tG }{  \kappa }}\right)^{1/2}\nonumber\\
    &= -\frac{ t^2s }{ 2 \kappa }+ \frac{1}{ 2 } \ln\E e^{\frac{ 2t }{ \kappa }U(s)}+ \frac{1}{ 2 } \ln \E e^{\frac{ 2tG }{ \kappa }}. \label{E:estimate_exp_U_G}
  \end{align}
Recalling the definition of $U(s)$ and Lemma~\ref{lem_estimate_of_ev} we get that the second summand above is bounded by
  \begin{align*}
     \frac{t}{ \kappa } \sum_{ i=1 }^{ m } c_i &+ \frac{1}{ 2 }\sum_{ i=1 }^{ m } \ln \left( 1+\frac{ 2t(t'+s)c_i^2 }{ \kappa } \right)+ \frac{ 2t^2(t'+s) }{ \kappa^2 }e^{\frac{ 2tc_{m+1} }{ \kappa }} \sum_{ i=m+1 }^{ \infty } c_i^3,
  \end{align*}
which for $s$ large is on the order of $m \cdot \ln s + s \sum_{ i=m+1 }^{ \infty } c_i^3$. So choosing $m$ large enough, this term can be made for all large $s$ smaller than a multiple of $t^2 s/8\kappa$.

The finiteness of $\E e^{\frac{ 2tG }{ \kappa }}$ follows similarly from Lemma~\ref{lem_finiteness_of_e_min_of_v}, the well-known fact that $\max\limits_{ r \in [0,t'] }W(s)\stackrel{d}{=}|W(t')|$, and the repeated application of the H\"older inequality. We leave the details to the reader. 
 % \begin{align*}
 %   \ln \E e^{\frac{ 2tG }{ \kappa }}&\leq \ln \E e^{t'\left( \sqrt{ \kappa }W(t')-\min\limits_{ r \in [0,t'] }W^{\kappa,t,\bc}(r) \right)}\leq \frac{1}{ 2 }\ln \E e^{2t' \sqrt{ \kappa }W(t')}+ \frac{1}{ 2 }\ln \E e^{-2t' \min\limits_{ r \in [0,t'] }W^{\kappa,t,\bc}(r)}\\
%    &\leq \frac{1}{ 2 }\ln \E e^{2t' \sqrt{ \kappa }W(t')}+ \frac{1}{ 2 } \ln \E e^{-2t' \min\limits_{ r \in [0,t'] }\left( \sqrt{ \kappa }W(s)+tr- \frac{1}{ 2 }\kappa r^2+V^{\bc}(s) \right)}\\
%    &\leq \frac{1}{ 2 }\ln \E e^{2t' \sqrt{ \kappa }W(t')}+ \frac{1}{ 4 }\ln \E e^{4t'\sqrt{ \kappa } \max\limits_{ r \in [0,t'] }|W(s)|}+ \frac{1}{ 4 }\ln \E e^{4t' \max\limits_{ r \in [0,t'] }|V^{\bc}(s)|}.
 % \end{align*}
We can now conclude that for all $s$ large enough the sum of the three terms in \eqref{E:estimate_exp_U_G} is bounded by $-t^2s/(4\kappa)$.

Let us consider the second summand in \eqref{E:bound_two_summands}.  Using Markov's and H\"older's inequalities, we estimate as before
  \begin{align*}
    \ln \p\{ ts/2-U(s)&-G\leq 1 \}\leq \ln \p\left\{ e^{U(s)+G}\geq e^{ts/2-1} \right\}\leq \ln \left( e^{-ts/2+1}\E e^{U(s)+G} \right)\\
    &\leq -\frac{ts}{2}+1+ \frac{1}{ 2 }\ln \E e^{2U(s)}+ \frac{1}{ 2 }\ln \E e^{2G}\\
    &\leq -\frac{ts}{2}+1+ \sum_{ i=1 }^{ m } c_i+ \frac{1}{ 2 }\sum_{ i=1 }^{ m } \ln \left( 1+2(t'+s)c_i^2 \right)\\
    &\qquad+2(t'+s)e^{2c_{m+1}}\sum_{ i=m+1 }^{ \infty } c_i^3+ \frac{1}{ 2 }\ln \E e^{2 G},
  \end{align*}
which can be bounded (via the same reasoning we applied in bounding \eqref{E:estimate_exp_U_G}) by $-\frac{ts}{4}$ for all large $s$. 
  
To summarize, we now know that, for all sufficiently large $s$, the survival probability
 $\p\left\{ \sigma_{\kappa,t,\bc}\geq s \right\}$ is dominated from above by $2\exp\{-s \min\{t/4, t^2/(4\kappa)\}\}$.
This multiple exponent (a function of $t$ and $\kappa$) is not the best (largest) possible, but here we are not interested in finding the optimal parameter.
Since exponential tails of the distribution are clearly sufficient for finite moment of any order, the proposition is proved.
\end{proof}

%%%%%%%%%%%%%%%%%%%%%%%%%%%%%%%%%%%%%%%%%%%%%%
%% Support information, if any,             %%
%% should be provided in the                %%
%% Acknowledgements section.                %%
%%%%%%%%%%%%%%%%%%%%%%%%%%%%%%%%%%%%%%%%%%%%%%
\begin{acks}[Acknowledgments]
The research presented in this paper was mostly conducted while the first author was employed at Hamburg University. 
%The authors would like to thank the anonymous referees, an Associate
%Editor and the Editor for their constructive comments that improved the
%quality of this paper.
\end{acks}

%%%%%%%%%%%%%%%%%%%%%%%%%%%%%%%%%%%%%%%%%%%%%%
%% Funding information, if any,             %%
%% should be provided in the                %%
%% funding section.                         %%
%%%%%%%%%%%%%%%%%%%%%%%%%%%%%%%%%%%%%%%%%%%%%%
\begin{funding}
The first author was partly supported by a visiting professor position from the University of Strasbourg and partly supported by the Deutsche Forschungsgemeinschaft (DFG, German Research Foundation) – SFB 1283/2 2021 – 317210226.  
%The first author was supported by NSF Grant DMS-??-??????.
%
%The second author was supported in part by NIH Grant ???????????.
\end{funding}

\end{document}